\def\dT{\mathcal{T}}
\def\cW{{\mathscr{W}}}
\DeclareMathOperator{\diam}{diam}
\def\Lip{\mathop\mathrm{Lip}} 						
\def\dist{\mathop\mathrm{dist}} 						
\newcommand{\ps}[1]{\left( #1 \right)}
\def\XXint#1#2#3{{\setbox0=\hbox{$#1{#2#3}{\int}$ }
\vcenter{\hbox{$#2#3$ }}\kern-.58\wd0}}
\newtheorem{theorem}{Theorem}[section]
\newtheorem{lemma}[theorem]{Lemma}
\theoremstyle{definition}
\newtheorem{definition}[theorem]{Definition}
\theoremstyle{remark}
\newtheorem{remark}[theorem]{Remark}
\numberwithin{equation}{section}
\newcommand{\R}{\mathbb{R}}
\newcommand{\N}{\mathbb{N}}
\newcommand{\Z}{\mathbb{Z}}
\newcommand{\C}{\mathbb{C}}
\newcommand{\lip}{\mathrm{Lip}}
\newcommand{\hd}{\mathcal{H}^d}
\newcommand{\hdc}{\mathcal{H}^d_\infty}
\newcommand{\B}{\mathbb{B}}
\newcommand{\grass}{G(d, d+1)}
\newcommand{\betae}[1]{\beta_E^{#1, d}}
\newcommand{\xjk}{x^k_j}
\newcommand{\xik}{x^k_i}
\newcommand{\xim}{x_{i}^m}
\newcommand{\ujk}{u^{k}_{j}}
\newcommand{\uik}{u_{i}^k}
\newcommand{\uim}{u^m_{i}}
\newcommand{\Ljk}{L_{j,k}}
\newcommand{\Lim}{L_{i,m}}
\newcommand{\Pjk}{P_{j,k}}
\newcommand{\Pik}{P_{i,k}}
\newcommand{\Pim}{P_{i,m}}
\newcommand{\Bjk}{B_{j,k}}
\newcommand{\Bim}{B_{i,m}}
\newcommand{\Qjk}{Q_{j,k}}
\newcommand{\Qim}{Q_{i,m}}
\newcommand{\dt}{\frac{dt}{t}}
\newcommand{\dr}{\frac{dr}{r}}
\newcommand{\cubes}{\mathcal{D}}
\newcommand{\chara}{\mathbbm{1}}
\newcommand{\wt}{\widetilde}
\newcommand\blfootnote[1]{%
  \begingroup
  \renewcommand\thefootnote{}\footnote{#1}%
  \addtocounter{footnote}{-1}%
  \endgroup
}
\numberwithin{equation}{section}
\theoremstyle{plain}
\newtheorem{sublemma}[theorem]{Sublemma}
\newtheorem{corollary}[theorem]{Corollary}
\newtheorem{proposition}[theorem]{Proposition}
\title[Tangent points and $\beta$ numbers]{Tangent points of lower content $d$-regular sets and $\beta$ numbers}
\author{Michele Villa}
\newcommand{\Addresses}{{
  \bigskip
  \footnotesize

  M. Villa, \textsc{Maxwell Institute for Mathematical Sciences,
School of Mathematics,
University of Edinburgh,
Edinburgh,
UK,
EH9 3FD}\par\nopagebreak
  \textit{E-mail address}: \texttt{m.villa-2@sms.ed.ac.uk}

}}
\begin{document}

\maketitle

\begin{center}

\begin{minipage}[c][][r]{300pt}
\begin{small}
\textsc{Abstract.} Given a lower content $d$-regular set in $\R^n$, we prove that the subset of points in $E$ where a certain Dini-type condition on the so-called Jones $\beta$ numbers holds coincides with the set of tangent points of $E$, up to a set of $\hd$-measure zero. The main point of our result is that $\hd|_E$ is not $\sigma$-finite; because of this, we use a certain variant of the $\beta$ coefficient, firstly introduced by Azzam and Schul in [AS1], which is given in terms of integration with respect to the Hausdorff content.
\end{small}
\end{minipage}
\end{center}
\blfootnote{\textup{2010} \textit{Mathematics Subject Classification}: \textup{28A75}, \textup{28A12} \textup{28A78}.

\textit{Key words and phrases.} Rectifiability, tangent points, beta numbers, Hausdorff content.

M. Villa was supported by The Maxwell Institute Graduate School in Analysis and its
Applications, a Centre for Doctoral Training funded by the UK Engineering and Physical
Sciences Research Council (grant EP/L016508/01), the Scottish Funding Council, Heriot-Watt
University and the University of Edinburgh.
}
\tableofcontents
\section{Introduction}
Let $E \subset \R^n$. One says that $E$ is $d$-rectifiable if there are Lipschitz maps $f_i: \R^d \to \R^n$, $i=1,2,...$ so that 
\begin{align}
    \hd\ps{E \setminus \bigcup_{i=1}^\infty f_i(\R^d)} = 0. 
\end{align}
Rectifiability is a central notion in geometric measure theory. It was first introduced (albeit under a different terminology) by Besicovitch  ([Bes1]) with $d=1$ and $n=2$; the theory was then extended to general $d,n$ by Federer ([Fed]); we refer the reader to [M1] for more details. 
A classic characterisation of rectifiability is given in terms of \textit{approximate tangents}. Let $x \in E$, and let $V$ be a $d-$dimensional affine plane in $\R^n$. One says that $V$ is an approximate tangent plane for $E$ at $x$ if $\limsup_{r \to 0} \frac{\hd(B(x,r)\cap E)}{r^d} >0$ and for all $0<s<1$, we have
\begin{align} \label{eq:approx_tan}
    \lim_{r \to 0} \frac{\hd\ps{E \cap B(x,r) \setminus X(x, V, s)}}{r^d}=0,
\end{align}
where $X(x, V, s) = \{y \in \R^n\, |\, d(y-x, V) < s|y-x|\}$ (and $B(x,r)$ denotes the open ball centered at $x$ and with radius $r$). Then the following is true. 
\begin{theorem}[{See [M1], Theorem 15.19}] \label{theorem:approxtan}
Let $E$ be an $\hd$-measureable subset of $\R^n$ with $\hd(E)<\infty$. Then the following are equivalent.
\begin{itemize}
    \item $E$ is $d$-rectifiable.
    \item For $\hd$-almost every point $x \in E$, there is a unique approximate tangent $d$-plane for $E$ at x.
\end{itemize}
\end{theorem}
In other words, a set is $d$-rectifiable if and only if, at least at small scales, it lays close to a $d$-plane; in Theorem \ref{theorem:approxtan}, `laying close' is made precise by \eqref{eq:approx_tan} | one, however, can easily think of other conditions that translate `laying close' into a mathematical quantity, and consequently define a different notion of tangent points and planes. Let us give a few examples. 
 \begin{itemize}
     \item  For $x \in E$, we say that an affine $d$-plane $V$ is a $C$-tangent\footnote{$C$ here is for \textit{cone}} plane for $E$ at $x$, if for all $0<s<1$, there exists a $r_0>0$ so that for all $r<r_0$, 
\begin{align} \label{eq:tangent_cone}
    E \cap B(x,r) \setminus X(x, V, s) = \emptyset.
\end{align}
A $C$-tangent point is defined in the obvious way.
\item If $U, V \subset \R^n$, we set
\begin{align}
    d_H(U, V) := \max \left\{ \sup_{u \in U} \dist(u, V), \sup_{v \in V} \dist(v, U) \right\}.
\end{align}
This is just the Hausdorff distance between two sets.
We say that $x \in E$ is an $H$-tangent\footnote{$H$ is for Hausdorff} point if there exists a $d$-dimensional affine plane $L$ so that 
    \begin{align} \label{eq:tangent_hausdorff}
   & \lim_{r \to 0} \frac{d_H\ps{E \cap B(x, r), L \cap B(x,r)}}{r} = 0.   \end{align} 
   We will call $L$ an $H$-tangent plane.
\item A point $x \in E$ is called a tangent\footnote{We don't put any prefix here: this definition will be the most used one in the paper.} point of $E$ if there exists a $d$-dimensional affine plane so that
\begin{align} \label{eq:tangent}
    \lim_{r \to 0} \sup_{y\in B(x,r)\cap E}\frac{\dist(y, L)}{r} = 0 .
\end{align}
In this case, we will call $L$ a tangent plane.
\end{itemize}

It is immediate to see that if a point is a $C$-tangent point, then it is an approximate tangent point.
With this in mind, let us state the following theorem of Bishop and Jones
\begin{theorem}[{[BJ1], Theorem 2}] \label{tangent2d}
Let $E \subset \C$ be a Jordan curve. Except for a set of zero $\mathcal{H}^1$-measure, $x\in E$ is a $C$-tangent point of $E$ if and only if
\begin{align} \label{dini}
    \int_0^{1} \beta_{E,\infty}(x,t)^2 \, \frac{dt}{t} < \infty
\end{align}
\end{theorem}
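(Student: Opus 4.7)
The statement is an ``iff'', so I would prove the two inclusions separately with different tools: the ``Dini $\Rightarrow$ tangent'' direction by a Reifenberg-type parameterization argument (following the paper's announced strategy using David--Toro), and the converse ``tangent $\Rightarrow$ Dini'' by a rectifiability argument reducing to the Carleson-type $\beta$-number bound available on rectifiable curves (Jones' traveling salesman theorem).

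\textbf{Sufficiency (Dini $\Rightarrow$ tangent).} Write $G_N$ for the set of $x\in E$ with $\int_0^{\diam E}\beta_{E,\infty}(x,t)^2\,\frac{dt}{t}\le N$; it suffices to show $\mathcal{H}^1$-a.e.\ $x\in G_N$ is a tangent point. A standard Egorov/Borel argument supplies, for any $\eta>0$, a compact $G_N^*\subset G_N$ with $\mathcal{H}^1(G_N\setminus G_N^*)<\eta$ on which the $\beta$-tails are uniformly small and for which one has a Carleson-type packing
\[
\sum_{Q\subset Q_0,\ Q\cap G_N^*\neq\emptyset}\beta_{E,\infty}(c_Q,r_Q)^2\,r_Q\ \le\ C\,\ell(Q_0)
\]
over the dyadic cubes in a neighborhood of $G_N^*$. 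This is the input required by the David--Toro parameterization, which produces a bi-Hölder embedding $\vp$ of $\R$ whose image contains $G_N^*$ and has bi-Hölder constant close to $1$. At points where the pointwise Dini integral is finite (which is $\mathcal{H}^1$-a.e.\ $x\in G_N^*$ by construction), one checks that $\vp$ is differentiable with nonvanishing derivative, giving a tangent line to $E$ at $x$.

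\textbf{Necessity (tangent $\Rightarrow$ Dini).} Let $T$ be the set of tangent points and $T_N$ the subset of $x$ for which $E\cap B(x,r)$ lies in a cone of half-angle $\le 1/N$ about the tangent line $L_x$ for every $r\le 1/N$. After a countable refinement fixing the approximate direction of $L_x$ in a small arc of the circle and localizing spatially, each such piece $T_N'$ lies on a Lipschitz graph $\Gamma$ of small slope, hence in a rectifiable curve with $\mathcal{H}^1(\Gamma)<\infty$. By Jones' traveling salesman theorem applied to $\Gamma$,
\[
\int_\Gamma \int_0^{\diam \Gamma} \beta_\Gamma(x,t)^2\,\frac{dt}{t}\,d\mathcal{H}^1(x)<\infty,
\]
so the analogous pointwise Dini integral with $\beta_\Gamma$ in place of $\beta_{E,\infty}$ is finite for $\mathcal{H}^1$-a.e.\ $x\in T_N'$. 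Finally, I transfer the bound from $\beta_\Gamma$ to $\beta_{E,\infty}$ using that at a tangent point $E\cap B(x,r)$ is concentrated in a thin cone about $L_x$ and that, locally, $\Gamma$ well-approximates the relevant piece of $E$.

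\textbf{Main obstacle.} The hardest step is the last one of the necessity direction: comparing $\beta_{E,\infty}(x,r)$ to $\beta_\Gamma(x,r)$ with an error small enough to preserve square-summability. The naive bound $\beta_{E,\infty}(x,r)\le \beta_\Gamma(x,r)+C/N$ from the cone condition is not summable in $r$, so one must refine the stratification, partitioning $T_N$ further into pieces on which the cone angle at scale $r$ decays in $r$ (e.g.\ by a further Egorov / Borel--Cantelli step), and then argue that the piece of $E$ lying in $B(x,r)\setminus \Gamma$ is not merely close to $L_x$ but is in fact within $C\beta_\Gamma(x,r)r$ of $\Gamma$. The sufficiency direction's main technical point, the promotion of a pointwise Dini bound to a local Carleson bound, is by now a standard tool in this literature and is the reason Reifenberg-type parameterizations enter so naturally.
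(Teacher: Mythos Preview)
First, a clarification: this theorem is \emph{cited} from Bishop--Jones, not proved in the paper; what the paper proves is the generalization Theorem~\ref{theorem:main} for $d$-lower content regular sets using the content $\beta$ numbers $\beta_E^{p,d}$ rather than $\beta_{E,\infty}$. Your sufficiency outline (uniformize the tails, build a CCBP, apply David--Toro, then upgrade rectifiability to a genuine tangent at density points) is exactly the paper's strategy in Section~3, so that direction is fine.

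For the necessity direction your outline starts the same way as the paper's Section~4 (stratify tangent points by cone direction/aperture/radius, cover each piece by a Lipschitz graph via Lemma~\ref{lemma:mattila}), but your proposed resolution of the ``main obstacle'' diverges from the paper and is where the gap lies. The paper does \emph{not} compare $\beta_E$ to $\beta_\Gamma$ via an Egorov/Borel--Cantelli refinement; instead it first Whitney-extends $\tilde F$ to a Lipschitz graph $F$, invokes the Azzam--Schul comparison Lemma~\ref{lemma:azzamschul} to get
\[
\beta_E^{p,d}(Q)^2 \lesssim \beta_F^{p,d}(Q)^2 + \Big(\ell(Q)^{-d}\int_{E\cap 2B_Q}\big(\dist(y,F)/\ell(Q)\big)^p\,d\mathcal H^d_\infty\Big)^{2/p},
\]
and then bounds the error by a Whitney decomposition of the ``bubble'' region $B(K)$, showing each Whitney box $T_S$ contributes to only a geometric series in $\ell(Q)$. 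That mechanism relies essentially on the \emph{integral} nature of $\beta_E^{p,d}$: a few stray points of $E$ far from $F$ carry small content and hence small weight. Your suggested fix for $\beta_{E,\infty}$---arguing that $E\cap B(x,r)\setminus\Gamma$ lies within $C\beta_\Gamma(x,r)r$ of $\Gamma$---does not follow from the cone condition (the cone only pins $E$ near $L_x$, not near $\Gamma$), and the naive additive error $C/N$ you flag is indeed not square-summable. So either (i) specialize the paper's Whitney argument to $d=1$ with the content $\beta$ numbers (which recovers the $\beta_\infty$ statement via Lemma~\ref{lemma:betap_betainfty} only in one direction), or (ii) acknowledge that the original $\beta_\infty$ result of Bishop--Jones uses complex-analytic tools (harmonic measure, conformal maps) rather than this real-variable route.
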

The quantities $\beta_{E, \infty}$ appearing in $\eqref{dini}$ are the so-called Jones $\beta$-numbers | they where introduced by Jones in [J1], where he gave a solution to the Analyst's travelling salesman problem (see [R] for a survey), and they are defined as follows:
\begin{align}
    \beta_{E, \infty}^d (x,t) := \inf_{L} \sup_{y \in E \cap B(x,t)} \frac{\dist(y, L)}{r},
\end{align}
where the infimum is taken over all affine $d$-planes and $\dist(y, L)=\inf_{q \in L} |y-q|$. One may think of
$\beta_{E,\infty}(x,r)\cdot r$ as the width of the smallest strip which contains $E
\cap B(x,t)$. Theorem \ref{tangent2d} then says that that at $\mathcal{H}^1$-almost every point $x \in E_0:=\{x \in E| \int_0^{\diam(E)} \beta_{E, \infty}(x,r)^2 \dr <\infty\}$ there exists an approximate tangent plane for $E$; in particular $E_0$ is $1$-rectifiable.
Recently, Azzam and Schul introduced in [AS1] a variant of the Jones $\beta$ number, defined as follows.
\begin{definition} \label{def:betas}
Let $E \subset \R^n$ and define
\begin{align*}
    \beta_E^{d,p} (x,r) = \inf_L \left( \frac{1}{r^d} \int_0^1 \hdc (\{y \in B(x,r)\cap E \,|\, \dist(y,L)> tr\}) t^{p-1} \, dt \right)^{\frac{1}{p}},
\end{align*}
where the infimum is taken over all affine $d$-planes $L$ in $\R^n$, and $\hdc$ denotes the Hausdorff content.
\end{definition}
Azzam and Schul used this version of $\beta$ number to prove an analyst's travelling salesman theorem for lower content $d$-regular sets in $\R^n$ | their result is analogous to the one given by Jones in the plane (see Remark \ref{remark:novel} for some motivation on the usage of these coefficients). Let us recall what a lower content $d$-regular set is. 
A set $E \subset \R^n$ is \textit{$d$-lower content regular} with constant $c_0$ if for all $x \in E$ and all $\diam(E)> r>0$, it holds that
\begin{align} \label{eq:lowercontent}
    \hdc (E \cap B(x,r)) \geq c_0 r^d.
\end{align}
Examples of $d$-lower content regular sets are Reifenberg flat sets or sets satisfying Condition B (see [AS1], Remark 1.11 and references therein for definitions and more examples); a Jordan curve in $\C$ is $1$-lower content regular.  

The main result of this paper is, in a sense, a generalisation to any $d$ and $n$ of Theorem \ref{tangent2d}.

\begin{theorem} \label{theorem:main}
Let $E \subset \R^n$ be a bounded and lower content $d$-regular subset and let $E_0 \subset E$ be a subset with positive and finite $d$-dimensional Hausdorff measure. If $d=1$ or $d=2$ let $1 \leq p < \infty$; if $d \geq 3$, let $1 \leq p \leq \frac{2d}{d-2}$. Then the following are equivalent.
\begin{enumerate}[label=\textbf{T\arabic*}]
    \item For $\hd$-almost every $x \in E_0$, 
    \begin{align} \label{eq:main_beta}
    \int_0^{\diam(E)} \beta_E^{p, d}(x,t)^2  \dt < \infty. 
    \end{align}
    \label{item:main_beta}
      \item For $\hd$-almost all $x\in E_0$, there exists a $d$-dimensional affine plane $L$ so that 
   \begin{align} \label{eq:main_tan}
   \lim_{r \to 0} \sup_{y\in B(x,r)\cap E}\frac{\dist(y, L)}{r} = 0 .
\end{align}
\label{item:main_tan}
    \item For $\hd$-almost all $x\in E_0$, there exists a $d$-dimensional affine plane $L$ so that 
    \begin{align} \label{eq:main_hausdorff}
   & \lim_{r \to 0} \frac{d_H\ps{E \cap B(x, r), L \cap B(x,r)}}{r} = 0.   \end{align} 
   \label{item:main_hausdorff}
   \item For $\hd$-almost all $x\in E_0$, there exists a $d$-dimensional affine plane $L$ so that, for all $0<s<1$,
   \begin{align} \label{eq:main_approx}
       \lim_{r\to 0} \frac{\hd\ps{E \cap B(x, r) \setminus X(x, V, s) }}{r^d} = 0.
   \end{align} 
   \label{item:main_approx}
\end{enumerate}
In particular, if any of \eqref{item:main_beta}, \eqref{item:main_tan}, \eqref{item:main_hausdorff} or \eqref{item:main_approx} holds, then $E_0$ is $d$-rectifiable.
\end{theorem}

Let us make some remarks.
\begin{remark}
The content of the theorem is really in the equivalence between \ref{item:main_beta} and \ref{item:main_tan}. The equivalence between the various definition of tangents is known; we will include proofs for completeness. We chose to prove the equivalence $\ref{item:main_beta} \iff \ref{item:main_tan}$ for convenience: the definition of tangent in \eqref{eq:main_tan} seems the simplest and easiest to use.
\end{remark}

\begin{remark}
The conclusion about the rectifiability of $E_0$ in Theorem \ref{theorem:main} can be reached through different methods. For example, see Theorem 15.19 in [M1] on the equivalence of  \eqref{item:main_approx} and rectifiability. Also, that the Dini condition \eqref{item:main_beta} implies rectifiability is implicit in our proof of Proposition \ref{theorem:summabletangent}.
\end{remark}

\begin{remark}[On related works]
The relationship between rectifiability and a Dini-type condition on (some type of) Jones $\beta$ numbers has been an active area of research in the past thirty years. Starting with the already mentioned work of Jones [J1], these coefficients have been applied and used to understand geometric aspects of sets and measure in a variety of context; see for example David and Semmes in [DS1] and [DS2] (the reader will find here very interesting connections between harmonic analysis and rectifiability); David and Toro in [DT1] (see also Ghinassi in [G1]), and Edelen, Naber and Valtorta in [ENV1] (these papers relates the $\beta$ coefficients with parameterisation of Reifenberg flat sets); Schul in [Sch1] for a generalisation of the traveling salesman theorem of Jones on curves to Hilbert spaces; the series of results by Tolsa and Azzam and Tolsa [T1], [T2] and [AT1] for a characterisation of rectifiability in terms of a Dini square integrability condition on the $\beta$ coefficients; the series of results by Badger and Schul [BS1], [BS2], [BS3], for characterisations of $1$-rectifiable measure in terms of $\beta$ coefficients without density assumptions.
\end{remark}

\begin{remark} \label{remark:novel}
Let us mention why is the theorem novel. In any situation where one is dealing with sets of dimension larger than one, the $\beta_{E, \infty}$ coefficient become rather useless (at least when we want to consider a Dini-type condition on them, see the example in [Fa]). David and Semmes then introduced an averaged version of such numbers, given as
\begin{align}
    \beta_{E,p}^d(x,r):= \inf_{L} \ps{\frac{1}{r^d} \int_{B(x,r)} \ps{\frac{\dist(y, L)}{r}}^p \, d \hd|_{E}(y) }^{\frac{1}{p}}.
\end{align}
Note that for $\beta_{E,p}^d$ to even make sense, $\hd|_E$ needs to be at least $\sigma$-finite. However, a generic lower content $d$-regular set will have dimension \textit{strictly larger than $d$} | just as in the case of Theorem \ref{tangent2d}, where Bishop and Jones considered a Jordan curve. For this reason we decided to use the $\beta$ coefficient introduced by Azzam and Schul: the integral there is a Choquet integral with respect to the Hausdorff content: it makes sense in any case. 
\end{remark}

\begin{remark}[On the range of $p$'s]
The range of $p$'s for which Theorem \ref{theorem:main} is true should not be (so) surprising. The results of David and Semmes for uniformly rectifiable sets and the Traveling Salesman Theorem of Azzam and Schul, for example, hold for the very same range. 
The `so' in brackets is there to say that things are not as not-surprising as they seem. The characterisation of $d$-rectifiable sets in terms of $\beta$ numbers by X. Tolsa in one direction (see [To1]) and by Azzam and Tolsa in the other (see [AT1]), holds only for $p=2$. The question remained open on whether this result could be extended to the 'expected' range of $p$: Tolsa's recent examples show that such a characterisation \textit{can only hold} for $p=2$ (see [To2]).
\end{remark}

\subsection{Acknowledgement}
I would like to thank Jonas Azzam, my supervisor; his help and patience have been indispensable to make any progress in this work.

\section{Preliminaries}

\subsection{Notation} \label{sec:notation}
We gather here some notation and some results which will be used later on.
We write $a \lesssim b$ if there exists a constant $C$ such that $a \leq Cb$. By $a \sim b$ we mean $a \lesssim b \lesssim a$.
In general, we will use $n\in \N$ to denote the dimension of the ambient space $\R^n$, while we will use $d \in \N$, with $d\leq n-1$, to denote the dimension of a subset $E \subset \R^n$, in the sense of lower content regularity.

For two subsets $A,B \subset \R^n$, we let
$
    \dist(A,B) := \inf_{a\in A, b \in B} |a-b|.
$
For a point $x \in \R^n$ and a subset $A \subset \R^n$, 
$
    \dist(x, A):= \dist(\{x\}, A)= \inf_{a\in A} |x-a|.
$
We write 
$
    B(x, t) := \{y \in \R^n \, |\,|x-y|<t\},
$
and, for $\lambda >0$,
$
    \lambda B(x,t):= B(x, \lambda t).
$
At times, we may write $\B$ to denote $B(0,1)$. When necessary we write $B^n(x,t)$ to distinguish a ball in $\R^n$ from one in $\R^d$, which we may denote by $B^d(x, t)$. 
See refer the reader to [Ma1] for more detail on Hausdorff measures and content. 
For $0<p<\infty$ and $A \subset \R^n$ Borel, we define the $p$-Choquet integral as 
\begin{align*}
    \int_A f(x)^p\, d \hdc(x) := \int_0^\infty \hdc(\{x \in A\, |\, f(x)>t\}) \, t^{p-1}\, dt.
\end{align*}
See [AS1], Section 2 and Appendix for more on Choquet integration.

\subsection{Coherent families of balls and planes}

A main tool which we will use is the theorem by G. David and T. Toro on parameterisations of Reifenberg flat sets with holes (see [DT1]). To state more precisely the result given there, we need to introduce a few more notions and definitions. 

\begin{definition}
Set the normalised local Hausdorff distance between two sets $E, F$ to be given by
\begin{align}
    d_{x,r}(E,F) := \frac{1}{r} \max \left\{ \sup_{y \in E\cap B(x,r)} \dist(y, F) \,; \, \sup_{y \in F \cap B(x,r)} \dist(y, E) \right\}. \label{def:normalisedhausdorff}
\end{align}
\end{definition}
Moreover, for $x \in \R^n$, $r>0$, we set
\begin{align*}
    \vartheta^d_E(B(x,r)) := \inf \{d_{B(x,r)}(E, L) \,|\, L \mbox{ is a } d \mbox{-dimensional plane in } \R^n\}.
\end{align*}

\begin{definition}
A subset $E \subset \R^n$ is said to be $(d, \epsilon)$-Reifenberg flat (or simply $\epsilon$-Reifenberg flat if the dimension is understood) if 
\begin{align*}
    \vartheta_E^d(B(x,r)) < \epsilon \mbox{ for all } x \in E \mbox{ and } r>0.
\end{align*}
\end{definition}

\begin{definition}[CCBP]\label{def:CCBP}
A coherent collection of balls and planes is a triple
\begin{align*}
(P_0,  \{\Bjk\}, \{\Pjk\}), \, k \in \N, \, j \in J_k,
\end{align*}
where 
\begin{enumerate}[label={\Alph*)}]
\item $P_0$ is a $d$-plane in $\R^n$. 
\item $\{\xjk\}$ is a collection of points in $\R^n$ so that, for fixed $k \in \N$, $\{\xjk\}_{j \in
J_k}$ is a subcollection with the property that
\begin{align}
	\dist(\xjk, \xik) \geq r_k, \,\, r_k = 10^{-k} \enskip \mbox{for all } i,j \in J_k. \label{eq:CCBP1} \tag{CCBP1}
\end{align}
\item $\{\Pjk\}$ is a family of planes so that for each $k \in \N$, $j \in J_k$, $\Pjk \ni \xjk$.
\end{enumerate}
Moreover, $(P_0, \{\xjk\}, \{\Pjk\})$ satisfies the following properties.
\begin{enumerate}[resume, label={\Alph*)}]
\item Set $\Bjk := B(\xjk, r_k)$ and, for $\lambda>0$,
\begin{align*}
\lambda V_k := \bigcup_{j \in J_k} \lambda B_{j, k}.
\end{align*}
The property which we require is that 
\begin{align} 
\xjk \in 2V_{k-1} \mbox{ for all } j \in J_k. \label{eq:CCBP2} \tag{CCBP2}
\end{align}
\item For $j \in J_0$, 
\begin{align}
\dist(x_{j}^0, P_0) \leq \epsilon. \label{eq:CCBP3} \tag{CCBP3}
\end{align}
\item For $k \geq 0$ and for $i,j \in J_k$ such that $\dist(\xjk, \xik) \leq 100r_k$, 
\begin{align}
d_{\xjk, 100r_k} (\Pjk, \Pik) \leq \epsilon. \label{eq:CCBP4} \tag{CCBP4}
\end{align}
\item For $k \geq 0$, $i \in J_k$ and $j \in J_{k+1}$ so that $\dist(\xjk, x_{i,k+1}) \leq 2r_k$, 
\begin{align}
	d_{\xik, 20r_k} (\Pik, P_{j, k+1}) \leq \epsilon.  \label{CCBP5} \tag{CCBP5}
\end{align}
\item For $j \in J_0$, 
\begin{align}
    d_{x_{j}^0, 100}(P_{j,0}, P_0) \leq \epsilon. \label{CCBP6} \tag{CCBP6}
\end{align}
\end{enumerate}
\end{definition}

The main technical result of [DT1] is the following.
\begin{theorem}[{[DT1], Thoerem 2.4}]\label{theorem:davidtoro}
Let $(P_0, \{\xjk\}, \{\Pjk\})$ be a CCBP. Then there exists a bijection 
\begin{align*}
g : \R^n \to \R^n
\end{align*}
so that 
\begin{align*}
\Sigma = g(P_0) 
\end{align*}
is a $C\epsilon$-Reifenberg flat set that contains the accumulation set
\begin{align}
	E^\infty  := \{ x \in \R^n \, & |\, x = \lim_{m \to \infty} x_{j(m)}^{ k(m)}, \nonumber\\
	& k(m) \in \N, \, \, j(m) \in J_{k(m)}, \nonumber \\
	& m \geq 0 \mbox{ and } \lim_{m \to \infty} k(m) =
	\infty \}.
\end{align}
\end{theorem}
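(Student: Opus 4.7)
The plan is to construct the bijection $g$ as a limit of a sequence of bi-Lipschitz maps $f_k: \R^n \to \R^n$ obtained by iterated small perturbations, one per scale $r_k = 10^{-k}$. Start with $f_0 = \id$ (so the initial approximating surface is $\Sigma_0 = P_0$). At each step $k\ge 1$, I would build a map $\sigma_k$ which is essentially the identity outside a neighborhood of $V_k$ and which, near each ball $B_{j,k}$, moves points toward $P_{j,k}$. Then set $f_k = \sigma_k \circ f_{k-1}$ and $\Sigma_k = f_k(P_0)$. The goal is to show that the $\Sigma_k$ converge to a $C\epsilon$-Reifenberg flat set $\Sigma$ containing $E_\infty$.

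To define $\sigma_k$ concretely, I would fix a partition of unity $\{\theta_{j,k}\}$ subordinate to a suitable enlargement of $\{B_{j,k}\}$, with bounds $|\nabla \theta_{j,k}| \lesssim r_k^{-1}$; this is possible because \eqref{eq:CCBP1} guarantees bounded overlap. Then put
\begin{align*}
\sigma_k(x) = x + \sum_{j \in J_k} \theta_{j,k}(x)\bigl(\pi_{j,k}(x) - x\bigr),
\end{align*}
where $\pi_{j,k}$ is the orthogonal projection onto $P_{j,k}$ — or, better, where the displacement $\pi_{j,k}(x)-x$ is computed along the normal direction of the \emph{previous} plane, so $\sigma_k$ fixes the normal direction of $\Sigma_{k-1}$ only infinitesimally. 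The crucial estimate is that on the support of $\theta_{j,k}$ the points of $\Sigma_{k-1}$ lie within distance $\lesssim \epsilon r_k$ of $P_{j,k}$: this uses \eqref{eq:CCBP3}, \eqref{eq:CCBP6} at the base scale and inductively \eqref{eq:CCBP4}, \eqref{CCBP5} together with \eqref{eq:CCBP2} (the latter guarantees that every new ball $B_{j,k}$ is close to some old one). From this, one deduces $\|\sigma_k - \id\|_\infty \lesssim \epsilon r_k$ and $\|D\sigma_k - I\| \lesssim \epsilon$, so each $\sigma_k$ is $(1+C\epsilon)$-bi-Lipschitz.

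Summing the $L^\infty$ bounds geometrically in $k$ shows $f_k$ converges uniformly to a homeomorphism $g$, and the multiplicative bi-Lipschitz bounds $\prod_k (1+C\epsilon) \le e^{C\epsilon\cdot\text{(overlap)}}$ need to be summed carefully: for fixed $x$, only $O(1)$ indices $k$ contribute nontrivially once $r_k$ drops below the scale at which $x$ stabilizes, so $g$ is $(1+C\epsilon)$-bi-Lipschitz globally. To see $E_\infty \subset g(P_0)$, given $x = \lim x_{j(m),k(m)}$ one locates preimages $y_m \in P_0$ with $f_{k(m)}(y_m) = x_{j(m),k(m)}$ (using that $\sigma_k$ can be arranged so that each $x_{j,k}$ lies on $\Sigma_k$, e.g.\ by centering projections at $x_{j,k}$); passing to a limit and using bi-Lipschitz continuity of $g$ gives $x = g(\lim y_m) \in g(P_0)$. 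Reifenberg flatness of $\Sigma$ follows from the fact that locally at scale $r_k$, $\Sigma$ is $C\epsilon r_k$-close to the affine plane $P_{j,k}$.

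The hard part, and the only place where real work is hidden, is the inductive control of the normal displacements — i.e.\ showing that the cumulative bi-Lipschitz constant does not blow up with $k$, since each scale contributes a factor $(1+C\epsilon)$ and there are infinitely many scales. The trick is that for a fixed $x$, the sum $\sum_k \|\sigma_k - \id\|_{C^1\text{ near }f_{k-1}(x)}$ is essentially a \emph{single} geometric series in $\epsilon$ (not a linear sum over $k$), because once $f_{k-1}(x)$ is at distance $\gg r_k$ from $V_k$ all the bumps $\theta_{j,k}$ vanish there. Making this precise requires the Whitney-type separation \eqref{eq:CCBP1} and the coherence \eqref{CCBP5} to ensure the $P_{j,k}$ at scale $k$ are a genuine refinement of the $P_{i,k-1}$; this is where the CCBP axioms are indispensable, and where one should expect the bulk of the estimation in the actual proof.
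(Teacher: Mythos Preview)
The paper does not prove this theorem; it is quoted from David--Toro [DT1] as an external tool, so there is no ``paper's own proof'' to compare against. Your sketch does follow the actual David--Toro construction in broad outline (iterated partition-of-unity perturbations $\sigma_k$ toward the planes $P_{j,k}$, uniform convergence of $f_k$, etc.), and the $L^\infty$ estimate $\|\sigma_k-\id\|_\infty\lesssim\epsilon r_k$ is correct and does give uniform convergence of $f_k$.

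There is, however, a genuine gap. You assert that $g$ is $(1+C\epsilon)$-bi-Lipschitz globally, arguing that for fixed $x$ only $O(1)$ scales $k$ act nontrivially ``once $r_k$ drops below the scale at which $x$ stabilizes.'' This is false precisely at the interesting points: if $z\in P_0$ has $g(z)\in E_\infty$, then $f_k(z)$ remains in (a bounded enlargement of) $V_k$ for \emph{every} $k$, so every $\sigma_{k+1}$ moves it nontrivially and the product $\prod_k(1+C\epsilon)$ over infinitely many scales genuinely diverges. This is exactly why Theorem~\ref{theorem:davidtoro} only asserts that $g$ is a \emph{bijection} (in [DT1] it is shown to be bi-H\"older with exponent close to $1$), and why the separate Theorem~\ref{theorem:davidtoro2} is needed, with the additional summability hypothesis \eqref{eq:CCBPsum}, to upgrade $g$ to bi-Lipschitz. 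Your argument conflates the two statements.

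A second, smaller issue: uniform convergence of homeomorphisms does not by itself give a bijection in the limit; one needs the bi-H\"older estimate (or some injectivity argument) to conclude that $g$ is invertible, and this is where the derivative bound $\|D\sigma_k-I\|\lesssim\epsilon$ is actually used in [DT1], via a telescoping H\"older estimate rather than a bi-Lipschitz one.
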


David and Toro  also prove a another version of Theorem \ref{theorem:davidtoro}, which requires a further assumption. In order to state it, we introduce one more quantity, which
will be used to control the rate at which the planes belonging to the CCBP change as we go through
locations and scales. We also need to recall from [DT1], Chapter 4, that the function $g$ of Theorem \ref{theorem:davidtoro}, `appears as the limit of a sequence of functions $g_k$'; such sequence converges 
uniformly to $g$. In fact, $g$ is pointwise defined as 
\begin{align} \label{eq:convfunction2}
g(x) := \lim_{k\to \infty} g_k (x), \,\, x \in \R^n. 
\end{align}
Moreover, we have the estimate
\begin{align}
    |g(z) - g_k(z)| \leq C\epsilon r_k \,\, \mbox{ for } k \geq 0 \mbox{ and } z \in P_0. \label{eq:convfunctions}
\end{align}
We refer the reader to Chapters 6, equation (6.1) and Lemma 6.1 in [DT1] for more details on this.  

For $k \geq 1$ and $y \in 11V_k$, set (see [DT1], Chapter 7, equation 7.16)
\begin{align*}
\epsilon_k(y) := \sup \{ d_{\xim, 100r_m} (\Pjk, \Pim) \, | & \, j \in J_k, m \in \{k-1, k\}, i \in
J_m, \\
& \mbox{ and } y \in 11\Bjk \cap \Bim\}.
\end{align*}

The following can be found in [DT], Chapter 8, Proposition 8.3.
\begin{theorem}\label{theorem:davidtoro2}
Let $(P_0, \{\Bjk\}, \{\Pjk\})$ be a CCBP. Let $g$ be the bijection obtained from Theorem
\ref{theorem:davidtoro}. If there exists a number $M>0$ so that
\begin{align}
\sum_{k \geq 0} \epsilon_k(g_k(z))^2 \leq M \,\, \forall z \in P_0, \label{eq:CCBPsum} \tag{CCBPS}
\end{align}
then $g: P_0 \to \Sigma$ is bi-Lipschitz with constant $K$ depending on $n, d$ and $M$.
\end{theorem}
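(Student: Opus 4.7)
I would control the bi-Lipschitz constant of $g$ on $P_0$ by tracking how the derivatives of the approximating maps $g_k$ evolve from one scale to the next, showing that the square-summability hypothesis \eqref{eq:CCBPsum} upgrades the a priori H\"older regularity of $g$ to a genuine bi-Lipschitz estimate. The underlying mechanism is that each step $g_{k-1} \mapsto g_k$ is approximately a rigid motion on the current approximating surface, so its first-order effect on lengths of tangent vectors cancels and only a quadratic-in-$\epsilon_k$ correction survives.

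More precisely: from [DT1] one has $g_k = \sigma_k \circ g_{k-1}$, where $\sigma_k$ is a diffeomorphism of $\R^n$ supported near $11V_k$ and obtained by gluing, via a partition of unity on the balls $11\Bjk$, the rigid motions that align the planes $P_{j,k-1}$ with $\Pjk$. The rotation angle, measured at $g_{k-1}(z)$ for $z \in P_0$, is bounded by $\epsilon_k(g_{k-1}(z))$. Fix $z \in P_0$ and a unit tangent vector $v$ to $P_0$, and set $w_{k-1} := Dg_{k-1}(z)v$. By the chain rule $Dg_k(z)v = D\sigma_k(g_{k-1}(z))w_{k-1}$. Since $w_{k-1}$ lies within angle $O(\epsilon)$ of the tangent direction to $\Sigma_{k-1}$ at $g_{k-1}(z)$, $D\sigma_k(g_{k-1}(z))$ acts on $w_{k-1}$ as a small rotation to leading order. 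Rotations are exact isometries and $\cos\theta = 1 - \theta^2/2 + O(\theta^4)$, so the length change is purely quadratic:
\begin{align*}
(1 - C\epsilon_k(g_{k-1}(z))^2)|w_{k-1}| \leq |Dg_k(z)v| \leq (1 + C\epsilon_k(g_{k-1}(z))^2)|w_{k-1}|.
\end{align*}
Telescoping and invoking \eqref{eq:CCBPsum} would give $e^{-CM}|v| \leq |Dg_k(z)v| \leq e^{CM}|v|$ uniformly in $k$. Passing to the limit and integrating along segments in $P_0$ (absorbing the error \eqref{eq:convfunctions} at the ends) would then yield the bi-Lipschitz bound on $g|_{P_0}$ with $K = e^{CM}$, and hence on $g \colon P_0 \to \Sigma$.

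\textbf{Main obstacle.} The technical heart lies in justifying the quadratic gain in the single-step estimate $|D\sigma_k w_{k-1}| \leq (1 + C\epsilon_k^2)|w_{k-1}|$, rather than the naive first-order bound $1 + C\epsilon_k$ (which telescopes to an unbounded factor under \eqref{eq:CCBPsum} alone, since square-summability does not imply summability). This would require: (i) extracting from David and Toro's partition-of-unity construction of $\sigma_k$ a formula for its derivative explicit enough to isolate the infinitesimal-rotation part; (ii) a quantitative guarantee that $Dg_{k-1}(z)v$ really lies within angle $O(\epsilon_k(g_{k-1}(z)))$ of $\Pjk$, not merely $O(\epsilon)$; and (iii) a Taylor expansion confirming that the first-order length correction cancels because the gluing maps tangent directions to tangent directions, so only the second-order term survives. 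This bookkeeping is precisely the content of Section 8 of [DT1], and is where the square-summability hypothesis is used in an essential way.
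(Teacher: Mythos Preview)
The paper does not prove this theorem at all: it is quoted verbatim from David and Toro's memoir, as the line preceding the statement makes explicit (``The following can be found in [DT], Chapter 8, Proposition 8.3''). There is therefore no ``paper's own proof'' to compare against; the result is used here as a black box to conclude that $E_0$ is covered by a bi-Lipschitz image of a $d$-plane.

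Your sketch is a reasonable high-level summary of how David and Toro's actual argument proceeds in their Section~8, and you correctly identify the crux: the single-step distortion estimate must be quadratic in $\epsilon_k$ rather than linear, since otherwise the telescoping product would not be controlled by $\sum_k \epsilon_k^2$. You also correctly locate the mechanism---on tangential directions the transition map $\sigma_k$ is an approximate rigid motion, so the first-order length change vanishes---and you are honest that making this rigorous is where all the work lies (your points (i)--(iii)). One small slip: the hypothesis \eqref{eq:CCBPsum} is stated at $g_k(z)$, while your iteration is naturally posed at $g_{k-1}(z)$; this is harmless because $|g_k(z)-g_{k-1}(z)|\leq C\epsilon r_k$ keeps both points in the same $11\Bjk$, but it is worth noting. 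In short, what you have written is an accurate outline of [DT1, \S8], not an alternative to anything in the present paper.
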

\subsection{Some lemmas on $\betae{p}$}
The following lemmas are to be found in [AS1], Section 2. They will be used later on in the various proofs. 
\begin{lemma}[{[AS1], Lemma 2.11}] \label{lemma:supbeta}
Let $E \subset \R^n$ and let $B(x,t)$ be a ball centered on $E$. Then
\begin{align*}
    \beta_E^{d,1}(x,t) \leq 2^d \beta_{E,\infty}^d (x,t).
\end{align*}
\end{lemma}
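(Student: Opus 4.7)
The plan is to use the same plane $L$ for both sides: take $L$ to be an (approximate) minimiser for $\beta_{E,\infty}^d(x,t)$, and then compare it against the infimum defining $\beta_E^{d,1}(x,t)$ by controlling the Hausdorff content of the super-level sets $\{y \in E \cap B(x,t) : \dist(y,L) > st\}$ for $s \in [0,1]$.

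More concretely, fix $\eta > 0$ and choose an affine $d$-plane $L$ with
\begin{align*}
\sup_{y \in E \cap B(x,t)} \frac{\dist(y,L)}{t} \; \leq \; \beta_{E,\infty}^d(x,t) + \eta \; =: \; \beta_\infty + \eta.
\end{align*}
Split the defining integral in two pieces according to whether $s$ exceeds $\beta_\infty + \eta$ or not. For $s > \beta_\infty + \eta$ the super-level set is empty by the choice of $L$, so it contributes nothing. For $s \leq \beta_\infty + \eta$, I would use the trivial bound $\hdc(A) \leq (\diam A)^d$ applied to $A \subseteq B(x,t)$, giving $\hdc(\{\cdot\}) \leq (2t)^d$. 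Integrating in $s$ yields
\begin{align*}
\int_0^1 \hdc\bigl(\{y \in B(x,t)\cap E : \dist(y,L) > st \}\bigr)\, ds \; \leq \; (\beta_\infty + \eta)(2t)^d,
\end{align*}
and dividing by $t^d$ then sending $\eta \to 0$ recovers the claim.

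There is no real obstacle; the only thing to be careful about is that the infimum defining $\beta_{E,\infty}^d$ need not be attained, which is why I introduce the small slack $\eta$ and pass to the limit at the end. Everything else is just the trivial Hausdorff content bound on the ball and the fact that an $L^\infty$ control on distances cuts off the distributional integral at height $\beta_\infty$.
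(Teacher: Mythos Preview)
Your proof is correct and essentially the same as the paper's: both use the trivial bound $\hdc(A)\le(\diam A)^d\le(2t)^d$ for $A\subset B(x,t)$ together with the fact that the $L^\infty$ control kills the super-level sets above height $\beta_\infty$. The paper phrases this via the Choquet integral inequality $\int_{E\cap B(x,t)}\tfrac{\dist(y,L)}{t}\,d\hdc(y)\le\bigl(\sup_y\tfrac{\dist(y,L)}{t}\bigr)\hdc(E\cap B(x,t))$ and keeps the $\inf_L$ throughout rather than introducing $\eta$, but the argument is identical.
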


\begin{lemma}[{[AS1] Lemma 2.13}] \label{lemma:betaincreasep}
Let $1 \leq p < \infty$, $E \subset \R^n$ a closed subset and $B$ a ball centered on $E$ with $\hdc(B \cap E) > 0$. Then 
\begin{align*}
\betae{1}(B) \lesssim_n \betae{p}(B).
\end{align*}
\end{lemma}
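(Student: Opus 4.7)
The plan is to run a level-set/Cavalieri-type comparison, using one near-minimizing plane for both quantities. Fix $\delta>0$ and choose an affine $d$-plane $L$ with
\begin{align*}
\left(\frac{1}{r^d}\int_0^1 f(t)\,t^{p-1}\,dt\right)^{1/p}\le \beta_E^{d,p}(B)+\delta =: \tilde\beta,
\end{align*}
where I write
\begin{align*}
f(t):=\hdc\bigl(\{y\in E\cap B(x,r):\dist(y,L)>tr\}\bigr).
\end{align*}
The function $f$ is nonincreasing in $t$ and bounded above by the trivial estimate $f(t)\le \hdc(B(x,r))\le (2r)^d$, which provides the only ``absolute'' scale we will need. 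With the \emph{same} $L$ I will estimate $\tfrac{1}{r^d}\int_0^1 f(t)\,dt$, which is an upper bound for $\beta_E^{d,1}(B)$, and conclude by letting $\delta\to 0$.

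The main mechanism is to split the $dt$-integral at the threshold $a:=\min(\tilde\beta,1)$. On $[0,a]$ I use only the pointwise bound $f(t)\le (2r)^d$ to get $\int_0^a f(t)\,dt\le (2r)^d\,a\lesssim_n r^d\tilde\beta$. On $[a,1]$, since $p\ge 1$ and $t\ge a$, I have $t^{p-1}\ge a^{p-1}$, so
\begin{align*}
\int_a^1 f(t)\,dt \;\le\; a^{1-p}\int_a^1 f(t)\,t^{p-1}\,dt \;\le\; a^{1-p}\cdot r^d\tilde\beta^{\,p}.
\end{align*}
If $a=\tilde\beta\le 1$ this right-hand side is exactly $r^d\tilde\beta$; if $a=1$, the integral is empty. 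Combining the two pieces gives $\int_0^1 f(t)\,dt\lesssim_n r^d\tilde\beta$, hence $\beta_E^{d,1}(B)\lesssim_n \tilde\beta$, and sending $\delta\to 0$ yields the claim.

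There is no real obstacle here: the argument is essentially the standard fact that a nonincreasing function bounded by a fixed constant has its (weighted) $L^p$ ``norm'' controlling its $L^1$ norm up to a dimensional constant, with the role of the ambient measure played by $t^{p-1}\,dt$ on $[0,1]$. The only mild care needed is (i) to work with the \emph{same} plane $L$ so that the two integrals are directly comparable, and (ii) to split at $\min(\tilde\beta,1)$ rather than $\tilde\beta$ in order to handle the degenerate regime when $\tilde\beta\ge 1$ without extracting a $p$-dependent constant. The hypothesis $\hdc(E\cap B)>0$ is used only implicitly, to guarantee that the infima defining $\beta_E^{d,p}(B)$ and $\beta_E^{d,1}(B)$ are attained on a nontrivial class.
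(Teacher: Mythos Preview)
Your argument is correct. The level-set splitting at $a=\min(\tilde\beta,1)$ is clean: the trivial content bound $f(t)\le(2r)^d$ handles the short interval, and the monotonicity of $t\mapsto t^{p-1}$ on $[a,1]$ converts the unweighted integral into the $t^{p-1}$-weighted one with exactly the right power of $a$ to cancel. The resulting constant $2^d+1$ depends only on $d$ (hence on $n$), not on $p$, which is the point of the $\lesssim_n$.

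As for comparison: the paper does not supply its own proof of this lemma; it is quoted verbatim as Lemma~2.13 of [AS1] and used as a black box. Your self-contained argument is therefore additional content rather than a reproduction. The proof in [AS1] follows essentially the same strategy---a Cavalieri-type splitting of the level-set integral at a threshold comparable to the target $\beta$---so your approach is in line with the original. One minor remark: the hypothesis $\hdc(E\cap B)>0$ is not actually used in your computation; if the content vanishes then $f\equiv 0$ and both sides are zero, so the inequality is trivial in that case as well.
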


\begin{lemma}[{[AS1] Lemma 2.14}] \label{lemma:monotonicity}
Let $1 \leq p < \infty$. Let $E \subset \R^n$. Let $B(x,t)$ be any ball centered on $E$ and let $B(y, s) \subset B(x,t)$ be also centered on $E$ with $s\leq t$. Then
\begin{align}
    \beta_E^{p,d}(y,s)^p \leq \left(\frac{t}{s}\right)^{d+p} \beta_E^{p,d}(x,t)^p. 
\end{align}
\end{lemma}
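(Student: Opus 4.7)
The plan is to estimate $\beta_E^{p,d}(y,s)^p$ by using, as a candidate hyperplane for the infimum over $B(y,s)$, a plane $L$ that (nearly) achieves the infimum for $B(x,t)$. Since we are bounding an infimum from above, a single test plane will suffice; in fact, because the definition involves an infimum, we may just fix $L$ arbitrary and bound the resulting integrals, then take the infimum over $L$ at the end.

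The starting observation is that $B(y,s) \subset B(x,t)$, so for every threshold $\tau > 0$,
\begin{align*}
\{ z \in B(y,s) \cap E : \dist(z,L) > \tau \} \subset \{ z \in B(x,t) \cap E : \dist(z,L) > \tau \}.
\end{align*}
Thus the integrand in the definition of $\beta_E^{p,d}(y,s)^p$, namely $\hdc(\{ z \in B(y,s) \cap E : \dist(z,L) > us\})$, is pointwise dominated by $\hdc(\{ z \in B(x,t) \cap E : \dist(z,L) > us\})$.

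The key step is a change of variables matching the scale $s$ with the scale $t$. In
\begin{align*}
I(y,s) := \int_0^1 \hdc(\{ z \in B(y,s) \cap E : \dist(z,L) > us\}) \, u^{p-1} \, du,
\end{align*}
substitute $v = us/t$, so that $u = vt/s$, $du = (t/s)\, dv$, and the range of $v$ is $[0, s/t] \subset [0,1]$. This produces a factor $(t/s)^p$ out front and converts the threshold $us$ into $vt$:
\begin{align*}
I(y,s) = \Bigl(\tfrac{t}{s}\Bigr)^{p} \int_0^{s/t} \hdc(\{ z \in B(y,s) \cap E : \dist(z,L) > vt\}) \, v^{p-1} \, dv.
\end{align*}
Applying the inclusion above and extending the range of $v$ from $[0, s/t]$ to $[0,1]$ (the integrand is nonnegative), we get
\begin{align*}
I(y,s) \leq \Bigl(\tfrac{t}{s}\Bigr)^{p} \int_0^{1} \hdc(\{ z \in B(x,t) \cap E : \dist(z,L) > vt\}) \, v^{p-1} \, dv.
\end{align*}

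Dividing by $s^d$ and multiplying and dividing the right-hand side by $t^d$,
\begin{align*}
\frac{1}{s^d} I(y,s) \leq \Bigl(\tfrac{t}{s}\Bigr)^{p+d} \cdot \frac{1}{t^d} \int_0^{1} \hdc(\{ z \in B(x,t) \cap E : \dist(z,L) > vt\}) \, v^{p-1} \, dv.
\end{align*}
Since this holds for every affine $d$-plane $L$, taking the infimum on both sides gives exactly
\begin{align*}
\beta_E^{p,d}(y,s)^p \leq \Bigl(\tfrac{t}{s}\Bigr)^{p+d} \beta_E^{p,d}(x,t)^p,
\end{align*}
as desired. There is no serious obstacle here; the only subtlety is to notice that the natural change of variables $u \mapsto us/t$ is what makes the powers of $t/s$ come out correctly and that the reduction of the range of integration only helps the inequality.
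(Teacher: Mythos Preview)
Your proof is correct and follows essentially the same approach as the paper's (suppressed) argument: use the inclusion $B(y,s)\subset B(x,t)$ together with a test plane for the larger ball, then perform the change of variables $v=us/t$ to convert the threshold $us$ into $vt$ and extract the factor $(t/s)^{p}$, with the remaining $(t/s)^d$ coming from the normalisation. The only cosmetic difference is that the paper writes things for $p=1$ and picks an explicit infimiser, whereas you keep $p$ general and take the infimum at the end; both are fine.
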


\begin{lemma}[{[AS1], Lemma 2.12}] \label{lemma:betap_betainfty}
Let $E \subset \R^n$. Let $B$ be a ball centered on $E$ so that for all $B' \subset B$ also centered on $E$ we have $\hdc(B' \cap E) \geq c_0 r_{B'}^d$. Then 
\begin{align} \label{eq:betap_betainfty}
\beta_{E, \infty}^d (\frac{1}{2}B) \leq 2c_0^{-\frac{1}{d+1}} \betae{1}(B)^{\frac{1}{d+1}}.
\end{align}
\end{lemma}
\begin{remark}
At first sight, one could think that the implication $\ref{item:main_beta} \implies \ref{item:main_tan}$ follows immediately from Lemma \ref{lemma:betap_betainfty}. This is not the case because the infimising planes in \eqref{eq:betap_betainfty} could change as $r\downarrow 0$: we want to prove instead the existence of a \textit{fixed} tangent plane so that \eqref{eq:main_tan} holds true. 
\end{remark}

\begin{lemma}[{[AS1], Lemma 2.21}] \label{lemma:azzamschul}
Let $1 \leq p < \infty$ and $E_1, E_2 \subset \R^n$. Let $x \in E_1$ and fix $r>0$. Take some $y \in E_2$ so that $B(x,t) \subset B(y, 2t)$. Assume that $E_1, E_2$ are both lower content $c$-regular. Then
\begin{align*}
    \beta_{E_1}^{p,d} (x,t) \lesssim_c \beta_{E_2}^{p,d} (y, 2t) + \left( \frac{1}{t^d} \int_{E_1 \cap B(x, 2t)} \left(\frac{\dist (y, E_2)}{t} \right)^p \, d \hdc(y)\right)^{\frac{1}{p}}.
\end{align*}
\end{lemma}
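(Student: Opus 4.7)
The plan is to use a (near-)infimizing $d$-plane $L$ for $\beta_{E_2}^{p,d}(y,2t)$ as a suboptimal competitor in $\beta_{E_1}^{p,d}(x,t)$, and then to split the resulting Choquet integral according to whether a point of $E_1$ is close to, or far from, $E_2$.

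Concretely, fix $\eta>0$ and choose a plane $L$ nearly infimizing $\beta_{E_2}^{p,d}(y,2t)$. Using $L$ as a competitor in the Choquet form of $\beta_{E_1}^{p,d}(x,t)$ reduces the task to estimating
\begin{align*}
\frac{1}{t^d}\int_0^1 \hdc\!\bigl(\{z\in E_1\cap B(x,t):\dist(z,L)>t\tau\}\bigr)\tau^{p-1}\,d\tau.
\end{align*}
For each $\tau\in(0,1)$, split the superlevel set as $A_\tau\cup B_\tau$, with $A_\tau=\{z:\dist(z,E_2)>t\tau/4\}$ and $B_\tau$ the remainder. For $z\in B_\tau$ pick $w_z\in E_2$ with $|z-w_z|\le 2\dist(z,E_2)\le t\tau/2$; by the triangle inequality $\dist(w_z,L)\ge t\tau/2$, and since $B(x,t)\subset B(y,2t)$ we have $w_z\in E_2\cap B(y,5t/2)$.

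The $A_\tau$-contribution, after the change of variables $\sigma=\tau/4$ and using $E_1\cap B(x,t)\subset E_1\cap B(x,2t)$, reproduces (up to a constant) the $p$-th power of the error term, namely the Choquet integral of $(\dist(\cdot,E_2)/t)^p$ over $E_1\cap B(x,2t)$. For the $B_\tau$-contribution, the covering $B_\tau\subset\bigcup_z B(w_z,t\tau/2)$ combined with the bound $(t\tau)^d\lesssim_c \hdc(E_2\cap B(w_z,t\tau/2))$ (from $d$-lower content regularity of $E_2$), applied via a Vitali/packing argument, yields
\begin{align*}
\hdc(B_\tau)\lesssim_c \hdc\!\bigl(\{w\in E_2\cap B(y,3t):\dist(w,L)>ct\tau\}\bigr)
\end{align*}
for some absolute $c>0$. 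Passing from $B(y,3t)$ to $B(y,2t)$ costs only a constant, either by Lemma \ref{lemma:monotonicity} or by running the whole argument at scale $3t$ at the outset.

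Integrating against $\tau^{p-1}d\tau$, dividing by $t^d$, extracting $p$-th roots, and sending $\eta\to 0$ yields the stated inequality. The main obstacle is the $B_\tau$-step: relating $\hdc$ of a union of $E_2$-centered balls to the $\hdc$ of a single superlevel set of $\dist(\cdot,L)$ on $E_2$ is delicate because Hausdorff content is not additive on disjoint sets, so the Vitali selection and subsequent packing must be arranged so that the lower content regularity of $E_2$ produces a scale-invariant count proportional to the relevant content, rather than only to a single ball's content. This is the technical heart of the argument and is precisely where the hypothesis of $d$-lower content regularity of $E_2$ enters in an essential way; the regularity of $E_1$ is needed only implicitly, to ensure both $\beta$-numbers in the statement are meaningful.
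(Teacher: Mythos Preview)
The paper does not supply a proof of this lemma: it simply writes ``We refer the reader to [AS1], Lemma 2.21.'' Consequently there is no argument in the paper to compare your proposal against.

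That said, your outline is the natural strategy and is essentially the one used in [AS1]: pick a (near-)optimal plane $L$ for $E_2$, use it as a competitor for $E_1$, and split according to whether $\dist(z,E_2)$ is large or small relative to the level $t\tau$. The $A_\tau$-part goes through exactly as you describe. You are also right that the $B_\tau$-part is where the work lies, and your honesty about this is appropriate: the step ``$\hdc(B_\tau)\lesssim_c \hdc(\{w\in E_2\cap B(y,3t):\dist(w,L)>ct\tau\})$'' cannot be obtained by a naive Vitali argument alone, precisely because $\hdc$ is not additive on disjoint balls. In [AS1] this is handled not level-by-level but by working with the Choquet integral as a whole, using the elementary subadditivity and scaling properties of $\hdc$-integration together with the pointwise inequality $\dist(z,L)\le \dist(z,E_2)+\dist(w_z,L)$ (for $w_z$ a near-closest point of $E_2$), which reduces matters to comparing Choquet integrals of $1$-Lipschitz functions on the two sets; the lower content regularity of $E_2$ then enters to pass from a neighbourhood of an $E_2$-superlevel set back to the set itself at the cost of a fixed dilation of scale. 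Your sketch is headed in the right direction, but as written the $B_\tau$-step is a plan rather than a proof; to close it you should either invoke the relevant Choquet-integral comparison lemma from [AS1] directly, or reformulate the covering so that the passage from balls to superlevel sets happens after integration in $\tau$, where subadditivity of $\hdc$ suffices.

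One small correction: the lower content regularity of $E_1$ is not merely ``implicit.'' It is what guarantees that $\beta_{E_1}^{p,d}$ controls flatness in a meaningful way (cf.\ Lemma~\ref{lemma:betap_betainfty}), but it is not used in the inequality itself; your instinct here is correct.
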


\subsection{Intrinsic cubes and Whitney cubes.}

In this section we briefly recall two `cubes'-tools which will be needed later on. 

\bigskip

\textbf{Intrinsic cubes.} First David in [Da1] and then M. Christ in [Ch1], introduced a construction which allows for a partition of any space of homogeneous type into open subsets which resemble in behaviour the dyadic cubes in $\R^n$. Recall that a space of homogeneous type is a set $X$ equipped with a quasi-metric $p$ and a Borel measure $\mu$ so that 
\begin{itemize}
\item The balls associated to $p$ are open.
\item $\mu(B(x,r)) < \infty$ for all $x \in X$, $r>0$.
\item $\mu$ satisfies the doubling condition with respect to these balls, 
that is, there exists a constant $A$, independent of $x$ and $r$ so that
\begin{align*}
    \mu(B(x, 2r)) \leq A \mu(B(x,r)).
\end{align*}
\end{itemize}
In this case, we use Christ's construction:
\begin{theorem}[Christ's cubes] \label{theorem:christ}
Let $(X, p, \mu)$ be a space of homogeneous type. Then there exists a collection of open subsets $(Q_{j,k})$, $k \in \Z$ and $j \in J_k$, and constants $\delta \in (0,1)$, $C_1 < \infty$ and $a_0>0$ so that
\begin{align}
&  \mu\left(X \setminus \cup_{j \in J_k} \Qjk \right) = 0 \,\, \forall k.\label{christ1}\\
& \mbox{If } k \geq m \mbox{ then either } \Qjk \subset \Qim \mbox{ or } \Qjk \cap \Qim = \emptyset.\label{christ2}\\
& \mbox{For each } (k,j) \mbox{ and each } m < k \mbox{ there is a unique } i \mbox{ so that } \Qjk \subset \Qim. \label{christ3} \\
& \diam(\Qjk) \leq C_1 \delta^k \label{christ4}\\
& \mbox{Each } \Qjk \mbox{ contains some ball } B(\xjk, a_0 \delta^k). \label{christ5}
\end{align}
\end{theorem}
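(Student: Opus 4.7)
The plan is to follow Christ's construction: select a maximal separated set of centres at each scale, link the centres into a parent--child tree, and then define each cube as an enlarged bubble formed by the descendants of its centre. Fix $\delta \in (0,1)$ small, to be determined at the end in terms of the quasi-metric constant of $p$ and the doubling constant $A$. For each $k \in \Z$, use Zorn's lemma to extract a maximal $\delta^k$-separated set $\{\xjk\}_{j\in J_k}$, so that $p(\xjk, x_{i,k}) \geq \delta^k$ whenever $i\neq j$ and every $y \in X$ lies within $p$-distance $\delta^k$ of some $\xjk$. For each $k$ and each $j \in J_{k+1}$, pick once and for all a parent $\pi(j, k+1) \in J_k$ with $p(x_{j,k+1}, x_{\pi(j,k+1),k}) \leq 2\delta^k$; maximality at the parent level forces existence. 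This organises $\bigsqcup_k J_k$ into a tree; write $(i,m) \preceq (j,k)$ when $(i,m)$ is a descendant of $(j,k)$ in this tree.

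Now define
\begin{align*}
Q^*_{j,k} := \bigcup \bigl\{ B(x_{i,m}, c_2 \delta^m) : (i,m) \preceq (j,k) \bigr\},
\end{align*}
with $c_2 < 1/2$ a small constant, and let $\Qjk$ be the topological interior of $\overline{Q^*_{j,k}}$. Since each descendant ball has radius $\leq c_2\delta^m$ and its centre lies within $p$-distance $\lesssim \sum_{\ell\geq k}2\delta^\ell \lesssim \delta^k$ of $\xjk$, this yields \eqref{christ4} with a constant $C_1$ depending only on $\delta$ and the quasi-metric constant. Property \eqref{christ5} follows with $a_0$ a slight shrinking of $c_2$ (to absorb the closure--interior operation), since $B(\xjk, c_2\delta^k) \subset Q^*_{j,k} \subset \Qjk$.

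Properties \eqref{christ2} and \eqref{christ3} are forced by the tree: distinct siblings at level $k$ have disjoint descendant families and hence disjoint bubbles, while $(j,k) \preceq (i,m)$ implies $Q^*_{j,k}\subset Q^*_{i,m}$, and after taking interior-of-closure the same nesting/disjointness persists. The analytic substance is in \eqref{christ1}, which is Christ's ``small boundaries'' estimate: one shows that for every $\eta>0$,
\begin{align*}
\mu\bigl(\{y \in X : p(y, \textstyle\bigcup_j \partial \Qjk) \leq \eta \delta^k\}\bigr) \leq C(\eta)\, \mu(X),
\end{align*}
with $C(\eta) \to 0$ as $\eta \to 0$, uniformly in $k$. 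The idea is to telescope this down the tree: at each level $m\geq k$, the doubling of $\mu$ bounds the mass of the $\eta\delta^m$-neighbourhood of the boundary of the bubbles at that level by a factor strictly less than $1$ times the mass at the previous level, provided $\delta$ was chosen small enough relative to $A$. Letting $\eta \to 0$ then forces the complement of $\bigcup_j \Qjk$ to be $\mu$-null, which is \eqref{christ1}. I expect this small-boundary estimate to be the main obstacle; the other properties reduce to routine tree combinatorics, whereas here one has to fix the correct order of dependence $A \to \delta \to c_2 \to C_1 \to a_0$ and push the telescoping through carefully without circular choices of constants.
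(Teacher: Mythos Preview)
The paper does not prove this theorem; it is quoted from Christ [Ch1] and the reader is referred there for the construction and for the definition of the centres $x_{j,k}$. So there is no ``paper's own proof'' to compare against.

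That said, your sketch is essentially Christ's original argument, so in that sense it matches the cited source. Two places deserve more care if you intend this as a genuine proof rather than a road map. First, the sentence ``distinct siblings at level $k$ have disjoint descendant families and hence disjoint bubbles'' hides the real work: disjoint descendant \emph{index sets} do not automatically give disjoint unions of balls, since balls attached to different trees can overlap. One needs the $\delta^k$-separation of the centres together with the choice of $c_2$ small relative to $\delta$ and the quasi-metric constant to force $Q^*_{j,k}\cap Q^*_{i,k}=\emptyset$ for $i\neq j$, and then check that passing to interior-of-closure does not reintroduce intersections of positive measure. Second, the small-boundaries estimate as you wrote it, with $C(\eta)\,\mu(X)$ on the right, is vacuous when $\mu(X)=\infty$; Christ's actual statement is local (the $\eta\delta^k$-collar of $\partial Q_{j,k}$ inside $Q_{j,k}$ has mass $\lesssim \eta^\gamma \mu(Q_{j,k})$ for some $\gamma>0$), and it is this local decay, summed over $j$, that gives \eqref{christ1}. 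The telescoping you allude to is correct in outline but needs to be run cube-by-cube rather than globally.
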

We refer the reader to [Ch1] for a precise definition of $\xjk$. Roughly speaking, this is the center of the `cube' $\Qjk$. Also, the original statement involves a further property which we will not use.
T. Hyt\"{o}nen and H. Martikainen provided a variant to this construction in [HM1]. The main difference between the two, is that the latter results in an exact partition of the space $(X, p, \mu)$, that is 
\begin{align*}
    \bigcup_{j \in J_k} \Qjk = X\,\, \forall k \in \Z.
\end{align*}
However, Hyt\"{o}nen and H. Martikainen's cubes are not open. Taking their interior, however, one recovers Christ's cubes. 

We will write
\begin{align*}
    \mathcal{D} := \bigl\{ \Qjk \, |\, k \in \mathbb{Z} \mbox{ and } j \in J_k\bigr\}.
\end{align*}
To consider the family of cubes of a given scale, we fix $k \in \mathbb{Z}$ and we write
\begin{align*}
    \mathcal{D}_k :=\bigl\{ \Qjk \, |\, j \in J_k\bigr\}.
\end{align*}

We remark that in what follows we will not have the measure $\mu$. We will only have a distance (the Euclidean one). But this is not a problem, the construction of the cubes $Q_{j,k}$ does not depend on $\mu$. 

\bigskip

\textbf{Whitney cubes.}
We follow [St1], Lemma 2 of Chapter 1. Let $F$ be a closed nonempty subset of $\R^n$ endowed with the standard euclidean distance. Then there exists a collection $\{S_k\}$ of closed cubes with disjoint interiors which covers $F^c$ (the complement of $F$) and whose side lengths are comparable to their distance to $F$, that is, there exists a constant $A$ so that
\begin{align*}
    A^{-1} \dist(S_k, F) \leq \ell(S_k) \leq A \dist(S_k, F).
\end{align*}

\section{Equivalence of \eqref{item:main_tan}, \eqref{item:main_hausdorff} and \eqref{item:main_approx}}
In this section we prove the equivalence ($\hd$-almost everywhere) of the different versions of tangent present in Thereom \ref{theorem:main}. As mentioned above, these equivalences are already known; we add proofs for the sake of completeness.
\begin{lemma}
A point $x \in E$ is an approximate tangent point of $E$  if and only it's a tangent point of $E$. 
\end{lemma}
\begin{proof}
Take $E$ to be $d$-lower content regular (this insures that the hypothesis on the nonvanishing upper density is satisfied). Let us first show that if a point $x \in E$ is a tangent point in the sense of \eqref{eq:tangent}, then there exists an approximate tangent at $x$. Indeed, suppose not. Then there exists an $\epsilon >0$, a sequence $r_j \downarrow 0$  and an $0<s<1$ so that 
\begin{align*}
    \hd(\{E\cap B(x, r_j) \setminus X(x, V, s)\}) > \epsilon r_j \enskip \enskip \forall j \in \N.
\end{align*}
This implies that for each $j \in \N$, we may find a point $y_j \in E \cap B(x, r_j)$ but not in $X(x, V, s)$ such that
\begin{align*}
    \dist(y_j, V) \gtrsim s r_j.
\end{align*}
This is a contradiction, since we assumed that $a$ is a tangent point (in the sense of \eqref{eq:tangent}. 

Let us prove the converse. Let $x \in E$ be a point which admits an approximate tangent $V$. Again, we argue by contradiction; assume that there exists an $\epsilon >0$ and a sequence of radii $r_j \downarrow 0$, such that 
\begin{align*}
    \sup_{y \in B(x, r_j) \cap E} \frac{ \dist(y, V)}{r_j} > \epsilon.
\end{align*}
Then there exists a sequence of points $y_j \in E \cap B(x, r_j)$ such that $\dist(y_j, V)>\frac{\epsilon}{2} r_j$. Moreover, there exists an $s \in (0,1)$, depending on $\epsilon$, such that 
\begin{align*}
    B(y_j, \eta r_j) \subset B(x, r_j) \setminus X(a, V,  s),
\end{align*}
with $\eta= \epsilon/2$, and, since $E$ is $d$-lower content regular, 
\begin{align*}
    \hdc(B(y_j, \eta r_j) \cap E) > c_0 (\eta r_j)^d.
\end{align*}
Thus, 
\begin{align*}
    \hd(E \cap B(x, r_j) \setminus X(x, V, s)) & \geq \hdc(E \cap B(x, r_j) \setminus X(a,V, s)) \\
    & \geq \hdc(B(y_j, \eta r_j) \cap E)\\
    & \geq c_0 (\eta r_j)^d. 
\end{align*}
This contradicts the fact that $V$ is an approximate tangent at $x$. 
\end{proof}

\begin{lemma}
For $\hd$-almost every point $x \in E$, being a tangent point of $E$ is equivalent of being an $H$-tangent point of $E$.
\end{lemma}
\begin{proof}
Note that if $x$ is an $H$-tangent point, that is, it satisfies \eqref{eq:main_hausdorff}, than it follows immediately that it's a tangent point, i.e. it satisfies \eqref{eq:main_tan}.
Let us prove the converse statement; set $\mathcal{T}:= \{x \in E \, |\, x \mbox{ is a tangent point of } E\}$; without loss of generality, we assume that $0 < \hd(\mathcal{T})<\infty$. On the other hand, set $\mathcal{HT}:= \{x \in E \, |\, x \mbox{ is an } H-\mbox{tangent point of } E\}$. We already know that $\mathcal{HT} \subset \mathcal{T}$ and we want to show that $\hd\ps{\mathcal{T} \setminus \mathcal{HT}} = 0 $.
Let $f: \R^d \to \R^n$ be one of the Lipschitz maps  covering $E_0$ and set 
\begin{align}
    M:= f^{-1}(E_0).
\end{align}
Then $M$ is a measurable subset (with respect to the $d$-dimensional Lebesgue measure on $\R^d$). It suffices to prove that $\hd$-almost every point $x \in f(M)$ is an $H$-tangent point of $E$. Let $p \in M$ be a density point and such that $f$ is differentiable here. Let us denote the tangent plane of $f$ at $p$ by $L_p$. As $p$ is a density point, then we have that for all $\epsilon>0$, there exists a radius $r_0>0$ such that for any $q \in B^n(p, r)$, with $0<r<r_0$, we can find a $q' \in M$ so that $|q-q'|< \epsilon r$. 
From this it follows that, given $\epsilon>0$ and for $r>0$ sufficiently small, for any point $y \in f(B(p, r))$, we may find a point $y' \in f(M)\subset E_0$ so that $|y-y'| \lesssim \epsilon r$. To prove the lemma we can now proceed as follows. Given $\epsilon>0$, let $r>0$ be small enough. Given $q \in L_p$, let $y_q \in f(B(p, r))$ be so that 
$|y_q-q| \leq 2 \dist(q, f(B(p, r)))$. Then we have 
\begin{align*}
    \inf_{x \in E} |q - x| \lesssim \dist(q, f(B(p,r)) + \inf_{x \in E} |y_q-x|.
\end{align*}
There exists a point $y \in f(M)$ so that $|y-y_q|\lesssim \epsilon r$, and so,  $\inf_{x \in E} |y_q - x| \leq \inf_{x \in E} |x-y| + |y-y_q|$; note that the first term on the right hand side of this last inequality vanishes, since $y \in E$. If we now supremize over $q \in L_p \cap B(x_0, r)$, we obtain
\begin{align}
    \sup_{q \in L_p\cap B(x_0, r)} \dist(q, E) \lesssim \sup_{ q \in L_p\cap B(x_0, r)} \dist(q, f(B(p,r))) + \epsilon r.
\end{align}
Because $f$ is differentiable at $p$, $\sup_{ q \in L_p\cap B(x_0, r)} \frac{\dist(q, f(B(p,r)))}{r} \to 0$ as $r \to 0$. This completes the proof of the lemma.
\end{proof}
\section{\eqref{item:main_beta} implies \eqref{item:main_tan}} 

\begin{proposition} \label{theorem:summabletangent}
Let $E \subset \R^{n}$ be $d$-LCR with constant $c_0$; assume moreover that $E \subset B(0,1)$. For $k \in \Z$ set $r_k = 10^{-k}$. Let $1 \leq p < \infty$. Then $\hd$-almost every point $x \in E$ such that
\begin{align*}
\sum_{k \in \N} \beta_E^{p, d} (x, r_k)^2 < \infty,
\end{align*}
is a tangent point of $E$ (in the sense of \eqref{eq:tangent}).
\end{proposition}

Let us give some easy remarks.
\begin{remark}
Because of Lemma \ref{lemma:betaincreasep}, it is enough to prove Theorem \ref{theorem:summabletangent} for $p=1$. 
We may assume that the set
\begin{align*}
   \bigl\{ x \in E\, |\, \sum_{k \in \N} \betae{1} (x, r_k)^2 < \infty \bigr\}
\end{align*}
has non zero $\hd$ measure, for otherwise there is nothing to prove. 
\end{remark}

\begin{lemma} \label{lemma:reduction1}
It suffices to show that if $A \subset \bigl\{ x \in E\, |\, \sum_{k \in \N} \betae{1} (x, r_k)^2 < \infty \bigr\}$ and $0 < \hd(A) < \infty$, then $\hd$-almost every $x \in A$ is a tangent point of $E$.
\end{lemma}
\begin{proof}
Suppose that if $A \subset \bigl\{ x \in E\, |\, \sum_{k \in \N} \betae{1} (x, r_k)^2 < \infty \bigr\}$ and $0 < \hd(A) < \infty$, then $\hd$-almost every $x \in A$ is a tangent point of $E$. We then claim that for any $A' \subset E$ with $\sum_{k \in \N} \betae{1}(x, r_k)^2 < \infty$ on $A'$, then $\hd$-almost every $x \in A'$ is a tangent point of $E$ (with no assumption on the $d$-dimensional Hausdorff measure of $A'$).

We argue by contradiction. Suppose that there exists a subset $\Omega \subset A'$ with $0 < \hd(\Omega)$ and so that no $x \in \Omega$ is a tangent point of $E$. If $\hd(\Omega) <\infty$ then we already reached a contradiction. Hence we may take $\hd(\Omega) = \infty$. However, by Theorem 8.13 in [M1], there exists a compact subset $K \subset \Omega$ with $0< \hd(K) < \infty$ and so that no $x \in K$ is a tangent point of $E$. This leads again to a contradiction. Thus the lemma follows.
\end{proof}

\begin{remark} \label{remark:e0}
Let now $A \subset \bigl\{ x \in E\, |\, \sum_{k \in \N} \betae{1} (x, r_k)^2 < \infty \bigr\}$ be an arbitrary compact subset such that $0< \hd(A) < \infty$. Again, by Theorem 8.13 in [M1], such set exists. 
Let $\epsilon >0 $ be given. For $\ell \in \N$, define
\begin{align*}
    E_\ell:= \Bigg\{ x \in A\, |\, \sum_{k \geq \ell}^\infty \betae{1}(x, r_k)^2 < \epsilon^2\Bigg\}.
\end{align*}
For an arbitrary $\ell \in \N$ with $\hd(E_\ell) >0$, we now fix 
\begin{align} \label{eq:e_0}
    E_0 \subset E_\ell \subset A 
\end{align}
to be a compact subset of positive and finite $\hd$-measure. Notice that if we prove Proposition \ref{theorem:summabletangent} for $E_0$, then the statement in its full generality follows by first saturating $E_\ell$ with compact subsets, and then by taking the countable union $\cup_\ell E_\ell = A$. Moreover, by applying a dilation and a translation, we may assume that 
\begin{align} \label{eq:assumptioscaleplace}
E_0 \subset B(0, 1), \mbox{ with } 0 \in E_0,
\end{align}
and that $\ell = 1$ in \eqref{eq:e_0}. Thus, we see that, by construction
\begin{align}
    \betae{1}(x, r_k) < \epsilon \mbox{ for } x \in E_0 \mbox{ and for } k \geq 1.
\end{align}
\end{remark}

\begin{remark}
Note that $E_0 \subset E$ will not necessarily be a $d$-lower content regular set. However, this will not cause issues, for when estimating angles, we will be always integrating over the whole $E$.
\end{remark}

We now show that one can construct a bi-Lipschitz parameterisation of any such a subset $E_0$. \begin{remark}
\textbf{\textit{Its proof follows very closely [DT1], Chapter 13: the only difference being that we are dealing with a different kind of $\beta$ numbers.}}
\end{remark}

\begin{lemma} \label{lemma:param}
Fix $\epsilon>0$ and let $E_0 \subset B(0,1)$ be a compact subset of $\R^n$ with $0 \in E_0$ and such that $0 <\hd(E_0) < \infty$. If for $\hd$-almost every point $x$ in $E_0$, we have
\begin{align}
    \sum_{k=0}^\infty \betae{1} (x, r_k)^2 < \epsilon^2, 
\end{align}
then, denoting by $P_0$ the plane which minimises $\betae{1}(0,1)$, there exists a bi-Lipschitz map $g: P_0 \to \R^n$ with constant $\Lip(g) \leq Ce^{\epsilon}$ and such that $E_0\subset g(P_0)$.
\end{lemma}

We will prove Lemma \ref{lemma:param} in the next few subsections; the proof will consists in checking the hypotheses of Theorem \ref{theorem:davidtoro}.

\subsubsection{Construction of a CCBP: A to E}

Consider a maximal collection $\{\ujk\}$ of points in $E_0$ such that
\begin{align*}
\dist(\ujk, \uik) \geq \frac{4}{3}r_k \enskip \enskip i \neq j;\,\, i,j \in J_k.
\end{align*}
We may assume that $\#(J_0) = 1$ and that $u_{j, 0} = 0$. This is because of \eqref{eq:assumptioscaleplace}.

From now on, we will denote $L(\ujk, r_k)$, that is, the plane which infimises $\betae{1}(\ujk, 120r_k)$, as $\Ljk$.

Fix $k,j$. We choose a point $\xjk \in E$ closest to $\Ljk$ in $E \cap B(\ujk, r_k/3)$. Note that 
\begin{align}
     E \cap B(\ujk, r_k/3) 
    & \subset \{y \in E\cap B(\ujk, 120r_k) \, |\, \dist(y, \Ljk) \geq \dist(\xjk, \Ljk)\} \label{eq:60}
\end{align}
This and Chebyshev inequality give us
\begin{align}
    & \dist(\xjk, \Ljk) \\
    & \leq \frac{1}{\hdc(B(\ujk, r_k/3)\cap E)} \int_{E \cap B(\ujk, 120r_k)} \dist(y, \Ljk) \, d\hdc(y) \label{eq:62}\\
    & \lesssim \frac{1}{r_k^d} \int_{E \cap B(\ujk, 120r_k)} \dist(y, \Ljk) \, d\hdc(y) \label{eq:63} \\
    & = \betae{1} (\ujk, 120r_k) r_k \label{eq:64}\lesssim \epsilon r_k.
\end{align}
 Inequality \eqref{eq:62} follows from the containment \eqref{eq:60}. Inequality \eqref{eq:63} follows from the lower content regularity of $E$. Equation \eqref{eq:64} is due to the choice $\ujk \in E_0$.

We now define $\Pjk$ to be the $d$-plane parallel to $\Ljk$ such that it contains $\xjk$. The family $\{\Pjk\}$ will constitute the collection of planes through which we will be able to verify David and Toro's Theorem. 

\begin{enumerate}[label={\Alph*)}] 
\item There is only one element in the family $\{x_{j}^0\}_{j \in J_0}$. Thus we take 
\begin{align} \label{eq:plane0def}
P_0 := P(x_{j}^0, 1)
\end{align}
in the definition of CCBP.

\item The family $\{\xjk\}$ satisfies \eqref{eq:CCBP1} by construction. This is true up to a constant, but this is unimportant.

\item $\{\Pjk\}_{k\in \N, j\in J_k}$ will be the sought for family of $d$-planes.

\item Notice that 
\begin{align*}
E_0 \subset \bigcup_{j \in J_k} \overline{B}\left(\ujk, \frac{4}{3}r_k\right).
\end{align*}

Moreover, since $\xjk$ were chosen to be at distance strictly less than $\frac{1}{3}r_k$ from $\ujk$, then 
\begin{align*}
    \overline{B}\left(\ujk, \frac{4}{3}r_k\right) \subset B\left(\xjk, \frac{5}{3} r_k \right).
\end{align*}
Thus
\begin{align*}
    E_0 \subset \bigcup_{j \in J_k} B\left(\xjk, \frac{5}{3} r_k\right).
\end{align*}   
This lets us conclude that for $u \in E_0$, we may pick $i \in J_{k-1}$ so that $u \in B(x_{i}^{k-1}, \frac{5}{3}r_{k-1})$, and so 
\begin{align*}
    \dist(\xjk, x_{i}^{ k-1})  \leq \dist(\xjk, E_0) + \dist(u, x_{i}^{k-1})
    \leq \frac{1}{3}r_k + \frac{5}{3}r_{k-1} 
    < 2r_{k-1},
\end{align*}
and therefore that
$
    \xjk \in 2V_{k-1}.
$

\item Clearly \eqref{eq:CCBP3} follows from \eqref{eq:plane0def}.

\end{enumerate}

\begin{remark} \label{remark:AE}
We have now verified that $E_0$ (as given in Remark \ref{remark:e0}) satisfies hypothesis A, B, C, D, and E in Definition \ref{def:CCBP}.
\end{remark}

\begin{remark}
It is important to notice that $E_0 \subset E^\infty$ (where here $E^\infty$ is as in Theorem \ref{theorem:davidtoro}). Indeed, let $x \in E_0$. Then there is a sequence $m \mapsto u_{j(m)}^{k(m)}$ so that $\lim_{m\to \infty} u_{j(m)}^{k(m)} = x$. But then one also have that
\begin{align*}
    x_{j(m)}^{ k(m)} \to x \,\, \mbox{ as } \,\, m \to \infty.
\end{align*}
Thus $E_0 \subset E^\infty$. 
\end{remark}

\subsubsection{Construction of a CCBP: F to H}

We start with the following, somewhat technical, lemma, to be found in [DT1], Chapter 12, Lemma 12.7. It is useful to prove Lemma \ref{lemma:anglecontrol} below, which lets us control the angle between planes belonging to different location and scales.

\begin{remark}
    We remark that [AS1] provides a way of controlling angles between planes through $\beta$ numbers, that is, Lemma 2.16. However the construction which follows, which is the one given in [DT1], Chapter 13, adapted to our current $\beta$'s, is needed to check the summability hypothesis \eqref{eq:CCBPsum}.
\end{remark}

\begin{lemma} \label{lemma:planesdistance}
Let $x \in \R^n$, $r>0$, $\tau \in (0, 10^{-1})$. Let $P_1, P_2$ be two affine $d$-planes and take
$d$ orthogonal unit vectors $\{e_1,...,e_d\}$. Suppose that, for $0 \leq l \leq d$, we are given
points $a_l \in P_1$ and $b_l \in P_2$, so that 
\begin{align*}
& a_0 \in B(x, r), \\
& \dist(a_l,b_l) \leq \tau r \, \, \mbox{ for } 0 \leq l \leq d, \\
& \dist(a_l - a_0, re_l) \leq \frac{r}{10} \, \, \mbox{ for } 1 \leq l \leq d.
\end{align*}
Then 
\begin{align*}
d_{x, s} (P_1, P_2) \leq C \tau \, \, \mbox{ for } r \leq s \leq 10^4r. 
\end{align*}
\end{lemma}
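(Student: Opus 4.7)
The plan is to exploit the fact that $\{a_l - a_0\}_{l=1}^d$ is a near-orthogonal frame (scaled by $r$) for the direction of $P_1$: since each $a_l - a_0$ is within $r/10$ of $re_l$ and the $e_l$ are orthonormal, the Gram matrix of $\{(a_l - a_0)/r\}_{l=1}^d$ is an $O(1/10)$ perturbation of the identity, hence invertible with well-controlled inverse. Moreover, the hypothesis $|a_l - b_l| \leq \tau r$ forces $|(b_l - b_0) - (a_l - a_0)| \leq 2\tau r$, so, assuming $\tau$ small (say $\tau < 10^{-1}$), $\{b_l - b_0\}_{l=1}^d$ is an equally good near-orthogonal frame for the direction of $P_2$.

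Given $r \leq s \leq 10^4 r$ and an arbitrary $y \in P_1 \cap B(x,s)$, I would expand $y = a_0 + \sum_{l=1}^d c_l (a_l - a_0)$ using the basis above. Since $|y - a_0| \leq s + r \leq 2 \cdot 10^4 r$ and the basis is near-orthonormal after rescaling by $r$, the coefficients satisfy $|c_l| \leq C$ for an absolute constant $C = C(d)$ (which absorbs the factor $10^4$). Set $y' := b_0 + \sum_{l=1}^d c_l (b_l - b_0) \in P_2$; by the triangle inequality
\begin{align*}
|y - y'| \leq |a_0 - b_0| + \sum_{l=1}^d |c_l|\,|(a_l - a_0) - (b_l - b_0)| \leq \tau r + 2 C d\, \tau r \lesssim \tau r.
\end{align*}
Since $r \leq s$, this gives $\sup_{y \in P_1 \cap B(x,s)} \dist(y, P_2) \lesssim \tau s$. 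The reverse estimate $\sup_{z \in P_2 \cap B(x,s)} \dist(z, P_1) \lesssim \tau s$ is then obtained by the symmetric argument, expanding any $z \in P_2 \cap B(x,s)$ in the basis $\{b_l - b_0\}_{l=1}^d$ and using the point $a_0 + \sum c_l(a_l - a_0) \in P_1$ as a competitor; together the two bounds give $d_{x,s}(P_1, P_2) \leq C\tau$.

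The main technical point to verify carefully is the uniform bound $|c_l| \leq C(d)$ on the coefficients; this is precisely where the near-orthonormality of $\{(a_l - a_0)/r\}_{l=1}^d$ (equivalently, the invertibility of a matrix close to $\mathrm{Id}$) and the hypothesis that $\{e_1, \dots, e_d\}$ is orthonormal are essential, and it is the only place the upper endpoint $s \leq 10^4 r$ enters quantitatively. Once the coefficient bound is secured, the remainder is a standard triangle-inequality calculation, with both the factor $10^4$ from the range of $s$ and the factor $d$ absorbed into the constant $C$.
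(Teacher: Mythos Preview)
The paper gives no proof of this lemma; its entire argument is the one-line citation ``See [DT1], Chapter 12, Lemma 12.7.'' Your outline is exactly the standard proof (and is essentially what David--Toro do there): use that $\{(a_l-a_0)/r\}_{l=1}^d$ is a small perturbation of the orthonormal frame $\{e_l\}$ to obtain a well-conditioned basis for the direction of $P_1$, expand an arbitrary $y\in P_1\cap B(x,s)$ in this basis with uniformly bounded coefficients, and then replace each $a_l$ by $b_l$ to produce a point of $P_2$ within $O(\tau r)\leq O(\tau s)$ of $y$; the symmetric argument with the basis $\{b_l-b_0\}$ handles the other half of $d_{x,s}$.

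You have also correctly isolated the one nontrivial step, namely the coefficient bound $|c_l|\leq C(d)$. One caveat worth recording when you write this up: the crude lower bound
\[
\Bigl|\sum_{l} c_l\,\tfrac{a_l-a_0}{r}\Bigr|\ \geq\ |c|-\sum_l |c_l|\,\bigl|\tfrac{a_l-a_0}{r}-e_l\bigr|\ \geq\ \bigl(1-\tfrac{\sqrt d}{10}\bigr)|c|
\]
is only useful when $\sqrt d<10$; for large $d$ the hypothesis constant $\tfrac{1}{10}$ must be read as a small dimensional parameter (indeed, with $d=100$ one can choose $a_l=r e_l-(r/10)u$ for a suitable unit vector $u$ so that $\sum_l(a_l-a_0)=0$, and then the $a_l$ fail to span $P_1$). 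This is how the lemma is used both in [DT1] and in the present paper, where all constants are allowed to depend on $n$ and $d$, so it is a cosmetic point about the stated numerics rather than a gap in your strategy.
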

\begin{proof}
See [DT1], Chapter 12, Lemma 12.7.
\end{proof}

\begin{lemma} \label{lemma:anglecontrol}
Let $k \in \N$ and $j \in J_k$. Take either $m=k$ or $m=k-1$ and let $i \in J_m$ be so that
\begin{align} \label{eq:CCBPfh3}
    \dist(\xjk, \xim) \leq 100r_m.
\end{align}
Then, for $\frac{r_k}{2} \leq s \leq 5000r_k$, 
\begin{align}
d_{\xjk, s} (\Pjk, \Pim) &  \lesssim \betae{1}(\ujk, 120r_k) + \betae{1}(\uim, 120r_m),  \label{eq:hausdorffbeta}
\end{align}
\end{lemma}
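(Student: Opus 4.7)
The idea is to produce $d+1$ points on the set $E$ near $\xjk$ that project to well-spread points on both planes, and then invoke Lemma \ref{lemma:planesdistance}. Set $\beta_1 := \betae{1}(\ujk, 120 r_k)$, $\beta_2 := \betae{1}(\uim, 120 r_m)$, and assume $\beta_1 + \beta_2$ is smaller than some absolute constant (otherwise the bound in \eqref{eq:hausdorffbeta} is trivial since both planes pass through points within $O(r_k)$ of $\xjk$).

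\textbf{Step 1: Common ball with large content.} Because $|\xjk - \ujk| \leq r_k/3$, $|\xim - \uim| \leq r_m/3$, and $|\xjk - \xim| \leq 100 r_m$, the ball $B^* := B(\xjk, r_k/3)$ lies inside both $B(\ujk, 120 r_k)$ and $B(\uim, 120 r_m)$. By lower content regularity, $\hdc(E \cap B^*) \geq c_0 (r_k/3)^d$. Reading off the definitions of $\beta_1, \beta_2$ and using Chebyshev's inequality as in the chain \eqref{eq:60}--\eqref{eq:65}, for a sufficiently large constant $C$ the set
\begin{align*}
G := \{ y \in E \cap B^* : \dist(y, \Ljk) \leq C(\beta_1 + \beta_2) r_k \text{ and } \dist(y, \Lim) \leq C(\beta_1 + \beta_2) r_k \}
\end{align*}
satisfies $\hdc(G) \geq c_0 r_k^d / 10$.

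\textbf{Step 2: A spread-out configuration in $G$.} I would argue that there exist points $y_0, y_1, \dots, y_d \in G$ and orthonormal vectors $e_1, \dots, e_d$ with $|y_l - y_0 - (r_k/6) e_l| \leq r_k/60$ for each $l \geq 1$. Indeed, inductively, if $y_0, \dots, y_{l-1}$ were picked but no $y_l \in G$ lay at distance $\gtrsim r_k$ from the affine span of the previous points, then $G$ would sit in a thin tube around an affine $(d-1)$-plane; such a tube has $d$-dimensional Hausdorff content $\ll r_k^d$, contradicting the content bound. This is the step that genuinely uses lower content regularity, and I expect it to be the main technical obstacle: one must carefully quantify "thin tube'' versus the $\hdc$-lower bound on $G$.

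\textbf{Step 3: Projecting and applying Lemma \ref{lemma:planesdistance}.} Recall that $\Pjk$ is the translate of $\Ljk$ through $\xjk$, and $\dist(\xjk, \Ljk) \lesssim \beta_1 r_k$ by \eqref{eq:60}--\eqref{eq:65}, so $\Pjk$ lies within $O(\beta_1 r_k)$ of $\Ljk$; analogously for $\Pim$ and $\Lim$. Setting $a_l := \pi_{\Pjk}(y_l)$ and $b_l := \pi_{\Pim}(y_l)$, the bounds from $G$ give
\begin{align*}
|a_l - y_l| \lesssim (\beta_1 + \beta_2) r_k, \qquad |b_l - y_l| \lesssim (\beta_1 + \beta_2) r_k,
\end{align*}
hence $|a_l - b_l| \lesssim (\beta_1 + \beta_2) r_k$. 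The spread-out condition on the $y_l$'s transfers to the $a_l \in \Pjk$ up to error $O((\beta_1 + \beta_2) r_k)$, which is absorbable for $\beta_1 + \beta_2$ small. Lemma \ref{lemma:planesdistance}, applied with $x = \xjk$, $r = r_k/6$, and $\tau \sim \beta_1 + \beta_2$, now yields the desired estimate $d_{\xjk, s}(\Pjk, \Pim) \lesssim \beta_1 + \beta_2$ for $r_k/6 \leq s \leq 10^4 (r_k/6)$, which comfortably covers the range $r_k/2 \leq s \leq 5000 r_k$.
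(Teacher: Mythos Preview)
Your overall strategy---produce points of $E$ near $\xjk$ that are simultaneously close to both $\Ljk$ and $\Lim$, then feed them into Lemma~\ref{lemma:planesdistance}---matches the paper's. Where you diverge is in Step~2. The paper does not try to extract a spread configuration from a large-content set $G$; instead it fixes an orthonormal basis $e_1,\dots,e_d$ of $\Ljk$ \emph{first}, sets $p_l = \ujk + (r_k/2)e_l$ on the plane, uses lower content regularity together with small $\beta_\infty$ to find $w_l \in E$ within $C\epsilon r_k$ of each $p_l$, and only then applies Chebyshev in small balls $B(w_l, r_{k+2})$ to locate the $z_l$'s close to both planes. Because the $p_l$'s are an orthonormal grid by construction, the spread condition $|a_l - a_0 - (r_k/2)e_l| \le r_k/20$ of Lemma~\ref{lemma:planesdistance} is automatic up to $O(\epsilon r_k)$ errors.

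Your Step~2 has a genuine gap. The inductive ``thin tube'' argument yields points $y_0,\dots,y_d$ in general position (each $y_l$ far from the affine hull of the preceding ones), but it does \emph{not} yield orthonormal $e_l$ with $|y_l - y_0 - (r_k/6)e_l|\le r_k/60$. For instance, in $\R^2$ take $y_1-y_0=(1,0)$ and $y_2-y_0=(1/2,1/2)$: these are in general position, yet $|y_1-y_0|=1$ and $|y_2-y_0|=1/\sqrt{2}$ differ by more than $10\%$, so no single scale $r$ and orthonormal pair $e_1,e_2$ can satisfy your displayed inequality for both $l=1,2$. To repair this you would need either a variant of Lemma~\ref{lemma:planesdistance} phrased directly in terms of general position (such variants exist, but the lemma as stated is not one), or a further argument pinning down the lengths $|y_l-y_0|$ and forcing near-orthogonality---which, once you use that $G$ lies in a thin slab around $\Ljk$, essentially sends you back to the paper's ``start from grid points on the plane'' construction.

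There is also a minor numerical error in Step~3: with $r=r_k/6$ the range $[r,10^4 r]$ reaches only $10^4 r_k/6\approx 1667\,r_k$, not $5000\,r_k$. The paper takes $r=r_k/2$, for which $10^4 r = 5000\,r_k$ exactly.
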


\begin{proof}
Fix $k, j \in J_k$. We will now choose $d+1$ points which will control the position of $\Ljk$.
Let $\{e_1,...,e_d\}$ be an orthonormal basis for the vector space $V \in \grass$ parallel to $\Ljk$. Let
\begin{align*}
     p_0 = \ujk, \, \, 
    p_1 := p_0 + \frac{r_k}{2} e_1, 
    \, \,  \hdots, \,\,
    p_d := p_0 + \frac{r_k}{2} e_d.
\end{align*}
For $0 \leq l \leq d$ it holds that $\hdc(E \cap B(p_l, C\epsilon r_k)) \geq C \epsilon^d r_k^d$. Thus we may find a point 
\begin{align}
    w_l \in E \cap B(p_l, C\epsilon r_k). \label{eq:65b}
\end{align}
Below it will be useful to know that
\begin{align}
    B(w_l, r_{k+2}) \subset B(\xjk, r_k). \label{eq:66}
\end{align}
To see this, notice that
\begin{align*}
    |w_l - \xjk| & < |w_l - \ujk| + |\ujk- \xjk| \\
    & \leq |w_l - p_l| + |p_l - \ujk| + |\ujk - \xjk| \\
    & \leq C\epsilon r_k + \frac{1}{2}r_k + \frac{1}{3}r_k \\
    & = \frac{5}{6}r_k + C\epsilon r_k.
\end{align*}
Thus \eqref{eq:66} holds as long as $C\epsilon < 1/6$. By the choice of $\epsilon$, this is certainly satisfied. 

Again by using the lower content regularity of $E$ and Chebyshev inequality, we may find, for $0 \leq l \leq n$, a $z_l \in E\cap B(w_l, r_{k+2})$ which is the closest there to $\Ljk$ and thus, as in \eqref{eq:62} - \eqref{eq:64}, satisfies
\begin{align}
    \dist(z_l, \Ljk) & \leq \frac{1}{\hdc(E \cap B(w_l, r_{k+2}))} \int_{E\cap B(w_l,r_{k+2})} \dist(y, \Ljk) \, d \hdc(y)\nonumber \\
    & \lesssim \betae{1}(\ujk, 120r_k) r_k \lesssim \epsilon r_k. \label{eq:67}
\end{align}
Notice that we used the containment $B(w_l, r_{k+2}) \subset B(\xjk, r_k) \subset B(\ujk, 120r_k)$. 

Keeping $k,j$ fixed, take 
\begin{align*}
    \mbox{either   } m = k  \mbox{   or   } m = k-1,
\end{align*}
and let $i \in J_m$ be such that 
\begin{align*} 
\dist(\xjk, \xim) \leq 100r_m.
\end{align*}

Notice that 
\begin{align}
    B(w_l, r_{k+2}) \subset B(\uim, 110r_m). \label{eq:70}
\end{align}
Indeed, 
\begin{align*}
     |\uim- w_l| 
    & \leq |\uim - \xim| + |\xim - \xjk| +|\xjk - \ujk| + |\ujk- w_l|\\
    & \leq \frac{r_m}{3} + 100r_m + \frac{1}{3}r_k  + \left( \frac{1}{2}r_k+ C\epsilon r_k\right) \leq 110r_m.
\end{align*}

Because of \eqref{eq:70}, using lower content regularity (i.e. \eqref{eq:lowercontent}) we may  choose $z_l \in B(w_l, r_{k+2})$ so that we also have
\begin{align}
& \dist(z_l, \Lim) \\
& \lesssim \frac{1}{\hdc(E \cap B(w_l, r_{k+2}))} \int_{E \cap B(w_l, r_{k+2})}
\dist(y, \Lim) \, d \hdc(y) \nonumber \\
& \lesssim \frac{1}{r_k^d} \int_{E \cap B(\uim, 120r_m)} \dist(y, \Lim) \, d \hdc(y) \nonumber\\
& = \betae{1}(\uim, 120r_m). \label{eq:68}
\end{align}

We proceed as follows. Take $x=\xjk$ in Lemma
\ref{lemma:planesdistance}, take $r=\frac{r_k}{2}$ and take 
\begin{align*}
\tau = \betae{1}(\ujk, 120r_k) + \betae{1}(\uim, 120r_m).
\end{align*}
We will let $P_1$ to be $\Pik$ and $P_2$ to be $\Pim$. Moreover we take $\{e_1,...,e_d\}$ to be the
orthonormal basis of the $d$-subspace parallel to $\Pjk$ which was used to select the $z_l$'s.
Recall that
\begin{align*}
& \dist(z_l, \Ljk) \lesssim \betae{1}(\ujk, 120r_k)r_k \lesssim \epsilon r_k \\
& \dist(z_l, \Lim) \lesssim \betae{1}(\uim, 120r_m)r_m \lesssim \epsilon r_m. 
\end{align*}
Let $q_l \in \Ljk$ be so that $\dist(z_l, \Ljk) = \dist(z_l, q_l)$. 
We have
\begin{align*}
\dist(z_l, \Pjk) & = \inf_{p \in \Pjk} |z_l - p| \\
& \leq  |z_l - q_l| + \inf_{p \in \Pjk} |q_l - p| \\
& = \dist(z_l, \Ljk) + \dist(q_l, \Pjk) \\
& \lesssim \betae{1}(\ujk, 120r_k)r_k \\
& \lesssim \epsilon r_k.
\end{align*}
The estimate $\dist(q_l, \Pjk) \lesssim \betae{1}(\ujk, 120r_k)r_k$ follows from our initial choice of $\{\xjk\}$ and recalling that $\Pjk$ was constructed by translating $\Ljk$ by a distance smaller than $\betae{1}(\ujk, 120r_k)$.
We similarly obtain
\begin{align*}
    \dist(z_l, \Pim) \lesssim \betae{1}(\uim, 120r_m)r_m \lesssim \epsilon r_m. 
\end{align*}

For $0 \leq l \leq d$, set $a_l$ to be the point in $\Pjk$ for which 
$$|z_l - a_l| = \inf_{a \in \Pjk} |z_l - a|,$$
and similarly, let $b_l \in \Pim$ so that $|z_l- b_l| = \dist(z_l, \Pim)$.
Then,
\begin{align}\label{eq:CCBPfh1}
|a_l-b_l| & \leq |a_l - z_l| + |z_l - b_l| \nonumber\\
& = \dist(z_l, \Pjk) + \dist(z_l, \Pim) \nonumber \\
& \lesssim \betae{1}(\ujk, 120r_k)r_k + \betae{1}(\uim, 120r_m)r_m \\
& \leq \tau r_m. 
\end{align}
In \eqref{eq:CCBPfh1} we used \eqref{eq:67} and \eqref{eq:68}.

For $0 \leq l \leq d$, we have
\begin{align} \label{eq:CCBPfh2}
|(a_l - a_0) - \frac{r_k}{2}e_l| \leq \frac{r_k}{20}. 
\end{align}
Indeed, notice that
\begin{align*}
    |(a_l - a_0) -  \frac{r_k}{2} e_l| & \leq |(a_l - a_0) - (z_l - z_0)| + |z_l -z_0 - \frac{r_k}{2} e_l|\\
    & = |a_l - a_0 - (z_l - z_0)| + |z_l -z_0 - (p_l - p_0)|. 
\end{align*}
Now, 
\begin{align*}
|z_l -z_0 -( p_l - p_0)|  \leq 2r_{k+2} + |w_l - w_0 - (p_l - p_0)| 
     < 3r_{k+2}.
\end{align*}
This is due to how we defined $p_l$'s and to \eqref{eq:65b}.
One the other hand, 
\begin{align*}
|a_l - a_0 - (z_l - z_0)| \leq |a_l - z_l| + |a_0 - z_0|  \lesssim \epsilon r_m.
\end{align*}
Since $\epsilon$ is small, \eqref{eq:CCBPfh2} is satisfied. 

Hence we may apply Lemma \ref{lemma:planesdistance}, to obtain that
\begin{align*}
d_{\xjk, s} (\Pjk, \Pim)   \lesssim \tau 
= \betae{1}(\ujk, 120r_k) + \betae{1}(\uim, 120r_m),
\end{align*}
for $\frac{r_k}{2} \leq s \leq 5000r_k$. 
\end{proof}

\begin{enumerate}[resume, label={\Alph*)}]
\item We now obtain \eqref{eq:CCBP4}: that is, for $k \geq 0$ and for $i,j \in J_k$ such that 
\begin{align} \label{eq:CCBP4b}
\dist(\xjk, \xim) \leq 100r_k,
\end{align}
one wants $\dist_{\xjk, 100r_k}(\Pjk, \Pik) \lesssim \epsilon$. Indeed, \eqref{eq:CCBP4b} coincides with the premise \eqref{eq:CCBPfh3} of Lemma \ref{lemma:anglecontrol}, which can therefore be applied with $k=m$ and choosing $s =100r_k$, to obtain 
\begin{align*}
d_{\xjk, 100r_k}(\Pjk, \Pik) \lesssim \betae{1}(\ujk, 120r_k)
\end{align*}
and thus $\dist_{\xjk, 100r_k}(\Pjk, \Pik) \lesssim \epsilon$, since $\ujk \in E_0$.

\item We  verify \eqref{CCBP5} with $k-1$: first, notice that \eqref{eq:CCBPfh3} is satisfied since we only need $\dist(\xjk, x_{i,k-1})< 2r_{k-1}=2r_m$. Then we apply \eqref{eq:hausdorffbeta} with the choice $s=20r_{k-1}$ so to have
\begin{align*}
d_{x_{j,k-1}, r_{k-1}} (P_{j,k-1}, P_{i,k}) \lesssim \epsilon.
\end{align*}

\item To see \eqref{CCBP6}, we apply Lemma \ref{lemma:anglecontrol} with $k=m=0$ and we recall that $\diam(E_0) <1$, thus \eqref{eq:CCBPfh3} is immediately satisfied. Moreover, recall from \eqref{eq:plane0def} and the line below, that $P_0$ is in fact $P_{j,0}$ for some $j \in J_0$. Then \eqref{CCBP6} follows.
\end{enumerate}

\begin{remark}
Thus the properties F, G and H listed in Definition \ref{def:CCBP} are satisfied. This, together with Remark \ref{remark:AE} let us apply Theorem \ref{theorem:davidtoro}: we obtain the covering function $g$. Furthermore, $\Sigma \supset E_\infty$, and thus 
\begin{align*}
    E_0 \subset g(P_0).
\end{align*}
\end{remark}

\subsubsection{Summability condition \eqref{eq:CCBPsum}.} 
Recall $f_k, f$ from \eqref{eq:convfunction2} and the paragraph above. 
For $z \in P_0$, $y = f_k (z)$ and $y \in 10B_{j,k} \cap 11B_{i,m}$, we need to control 
\begin{align*}
\epsilon_k(y) := \sup_{m \in \{k-1,k\}, i \in J_m} d_{x_{i,m}, 100r_m}(\Pjk, \Pim). 
\end{align*}
Pick $\tilde z \in E_0$ so that 
\begin{align*}
    |f(z) - \tilde z| = \dist(f(z), E_0).
\end{align*}
We notice that
\begin{align}
    |\tilde z - y| & \leq |\tilde z - f(z)| + |f(z) - f_k(z)| \nonumber \\
    & = \dist(f(z), E_0) + |f(z) - f_k(z)| \label{eq:610}\\
    & \leq |f(z) - f_k(z)| + \dist(f_k(z), E_0) + |f(z) - f_k(z)| \nonumber\\
    & \leq C\epsilon r_k + \dist(f_k(z), E_0) \label{eq:611}\\
    & \leq C \epsilon r_k + |f_k(z) - \ujk| \label{eq:612}\\
    & \leq C\epsilon r_k + |f_k(z) - \xjk| + |\xjk - \ujk| \\
    & \leq C \epsilon r_k + 10r_k + \frac{r_k}{3}\label{eq:613} \\
    & \leq 11r_k.
\end{align}
Equation \eqref{eq:610} is due to the choice of $\tilde z$. \eqref{eq:611} is by \eqref{eq:convfunctions}. Inequality \eqref{eq:612} is also due to the choice of $\tilde z$ and by the fact that $\ujk \in E_0$. For \eqref{eq:613} recall that $y= f_k(z) \in 10\Bjk$, and that $\dist(\ujk, \xjk)\leq \frac{1}{3}r_k$.

Hence, 
\begin{align*}
|\ujk - \tilde z| & \leq |\ujk - \xjk| + |\xjk - y| + |y - \tilde z|  \leq 10r_k + \frac{1}{3}r_k + 11r_k \leq 22r_k.
\end{align*}
Similarly, since $y \in 11\Bim$, 
\begin{align*}
|\uim - \tilde z| & \leq |\uim - y| + |y - \tilde z| \leq 23r_m.
\end{align*}
This implies that
\begin{align*}
    B(\ujk, 120r_k) \cup B(\uim, 120r_m) \subset B(\tilde z, r_{k-3}). 
\end{align*}
Let $L(\tilde z, r_{k-3})$ be the $d$-plane which infimizes $\betae{1}(\tilde z, r_{k-3})$. Then 
\begin{align*}
    & \betae{1}(\ujk, 120r_k) + \betae{1}(u_i^m, 120r_k) \\ 
    & = \frac{1}{120r_k^d} \int_{E \cap B(\ujk, 120r_k)} \frac{\dist(y, L(\ujk, 120r_k))}{120r_k} \, \hdc(y)\\
    & \enskip \enskip + \frac{1}{120r_m^d}\int_{E \cap B(\uim, 120r_k)} \frac{\dist(y, L(\uim, 120r_k))}{120r_m}\, \hdc(y)\\
    & \leq \frac{1}{120r_k^d} \int_{E \cap B(\tilde z, r_{k-3})} \frac{\dist(y, L(\tilde z, r_{k-3}))}{120r_k} \, \hdc(y) \\
    & \enskip \enskip + \frac{1}{120r_m^d} \int_{E \cap B(\tilde z, r_{k-3})} \frac{\dist(y, L(\tilde z, r_{k-3})}{120r_m}\, \hdc(y)\\
    & \lesssim \betae{1}(\tilde z, r_{k-3}).
\end{align*}

All in all, we have the sequence of inequalities
\begin{align*}
d_{\xim, 100r_k}(\Pjk, \Pim) & \leq d_{\xjk, 200r_k}(\Pjk, \Pim)\\
& \lesssim \betae{1}(\xjk, r_k) + \betae{1}(\xim, 120r_m) \\
& \lesssim \betae{1}(\tilde z, r_{k-3}),
\end{align*}
that is, 
\begin{align*}
    d_{\xim, 100r_k}(\Pjk, \Pim) & \leq d_{\xjk, 200r_k}(\Pim, \Pjk) \leq C\betae{1}(\tilde z, r_{k-3}).
\end{align*}
Notice that the constant $C$ here does not depend on $k$ or $m$, and similarly for the choice of $\tilde z$. Hence we may write
\begin{align*}
    \epsilon_k(f_k(z)) \leq \betae{1}(\tilde z, r_{k-3}) \,\, \mbox{ for } z \in P_0, \, \tilde z \in E_0.
\end{align*}
This implies immediately that the summability condition \eqref{eq:CCBPsum} is satisfied. 

Having also verified the summability condition \eqref{eq:CCBPsum}, we can apply Theorem \ref{theorem:davidtoro2} to $E_0$ and thus obtain the bi-Lipschitz parameterisation $g$. This concludes the proof of Lemma \ref{lemma:param}. 

\begin{remark}
Clearly, at $\hd$-almost every point $x $ of $E_0$, we can find a tangent $d$-plane (in the sense of \eqref{item:main_tan}). However, a priori we can't say that such a tangent is a tangent  E. The next lemma uses the decay of the $\beta$ coefficients at $\hd$-a.e. $x \in E_0$ to show that, indeed, this turns out to be the case.
\end{remark}

\begin{lemma}
Let $E$ be a $d$-LCR subset of $\R^n$; consider a subset $E_0 \subset E$ with $0< \hd(E_0) < \infty$ which can be covered by a bi-Lipschitz image of a $d$-dimensional plane $P_0$. If at $\hd$-almost every point $x$ in $E_0$ it holds that 
$\sum_{k=0}^\infty \betae{1}(x,r)^2 < \epsilon^2$, then $x$ is a tangent point of $E$ \textup{(}in the sense of \eqref{item:main_tan}\textup{)}.
\end{lemma}
\begin{proof}
Without loss of generality, let us assume that $P_0 = \R^d$.
Set $M:= g^{-1}(E_0)$; note that $M$ is a measurable set with respect to the $d$-dimensional Lebesgue measure on $\R^d$ and clearly $0<\mathcal{L}^d(M)< \infty$. Let $p_0 \in M$ be a density point and moreover assume that $g$ is differentiable there | that the set of points in $M$ with these characteristics has full measure. It follows from the Lebesgue differentiation theorem that for all $\epsilon>0$, there exists an $r_0>0$ such that if $p' \in B^d(p_0, r)$, $0<r<r_0$, then there exists a point $p \in M$ such that $|p-p'|< \epsilon r$. 

Set $x_0 = g(p_0)$. Because $g$ is bi-Lipschitz, there exists a constant $c\leq 1$, depending only on $\Lip(g)$, such that $B^d(p_0, cr)  \subset g^{-1}(B^n(x_0, r))$; furthermore, since we chose $p_0$ to be a density point, for any $y \in g(B^d(p_0,cr))$, there exists a $y' \in f(M)$ so that $|y-y'|\leq C \epsilon r$. In particular, we can find $d$ points $y_1,...,y_d \in f(M)$ such that they are linearly independent (with good constant) and such that $\dist(y_j, L_p) \leq C \epsilon r$ for all $j=1,...,d$. 

Let us now argue by contradiction. Suppose there exists a point $z \in E$ with $\dist(z, L) \geq 1000 \epsilon r$. Then $z$ is linearly independent with respect to the family $y_1,...,y_d$ and therefore there cannot exist a $d$-plane $V$ so that $\dist(y_j, V) \leq C \epsilon r$ for all $j$ and $\dist(z, V) \leq C \epsilon r$. But this contradicts the fact that $\beta_{E, \infty}^d(x_0, cr) \lesssim_d \epsilon$ | which must hold from Lemma \ref{lemma:betap_betainfty}. The ensuing contradiction proves the lemma. 
\end{proof}

\begin{corollary} \label{corol:final2}
Let $E \subset \R^{n}$ be a lower content $d$-regular subset so that $E \subset B(0,1)$. Let $1 \leq p < \infty$. Except for a set of zero $\hd$ measure, if 
\begin{align*}
\int_0^{1} \betae{p}(B(x,t))^2 \frac{dt}{t} < \infty,
\end{align*}
then $x$ is a tangent point of $E$.
\end{corollary}
\begin{proof}
It is enough to show that for any $x \in E$,
\begin{align*}
    \sum_{k \in \N} \betae{p}(x, r_k)^2 \lesssim \int_0^{1} \betae{p}(x, t)^2\, \frac{dt}{t}. 
\end{align*}
Let $r_k \leq t \leq 10r_k$. Then recall from Lemma \ref{lemma:monotonicity}, that
$
\betae{p}(x, t)^2 \geq \left( \frac{r_k}{t} \right)^{2(d+p)} \betae{p}(x, r_k)^2. 
$
Thus
\begin{align*}
\int_{r_k}^{10r_k} \betae{p}(x,t)^2 \, \frac{dt}{t} \geq \int_{r_k}^{10r_k} \left( \frac{r_k}{t} \right)^{2(d+p)} \betae{p}(x, r_k)^2\, \frac{dt}{t} 
 \geq \frac{\ln(10)}{10^{2(d+p)}} \betae{p}(x, r_k)^2. 
\end{align*}
This let us conclude that
\begin{align*}
    \sum_{k \in \N} \betae{p}(x, r_k)^2 \lesssim_{p,d} \sum_{k \in \Z} \int_{r_k}^{10r_k} \betae{p}(x, t)^2 \, \frac{dt}{t}
     = \int_0^1 \betae{p}(x, t)^2 \, \frac{dt}{t}.
\end{align*}
The corollary then follows from Proposition \ref{theorem:summabletangent}.
\end{proof}

Given any bounded set $E$, we can translate and dilate it so that it is contained in the unit ball centered at $0$. Hence Corollary \ref{corol:final2} immediately gives one implication of Theorem \ref{theorem:main}.

\begin{remark}
One could think of an alternative way of proving this direction of Theorem \ref{theorem:main}; it was suggested to the author by Jonas Azzam. It goes as follows. Take a subset $E_1 \subset E$ where the Jones function is finite. First, reduce the problem to a subset $E_0 \subset E_1$ of positive and finite $d$-dimensional Hausdorff measure - as was done in Lemma \ref{lemma:reduction1}. Secondly, Frostmann's Lemma (e.g. see Theorem 8.8 in [M1]) guarantees the existence of a measure $\mu$ such that: $\mu$ is finite, it is supported on a compact subset of $E_0$ and it is upper $d$-regular, that is, $\mu(B(x, r))\leq r^d$ for any $x \in \R^n$ and $r>0$. Third, this implies that $\mu$ is bounded above by the $d$-dimensional Hausdorff content. Hence one can bound above the standard $\beta$ numbers with the content $\beta$ numbers. Fourth, this let us apply Theorem 1.1 in [AT1] and obtain rectifiability for $E_0$. Hence one can conclude as in Subsections 3.5 and 3.6.

Such a proof would be shorter and clearer It has, however, a drawback: as Theorem 1.1 in [AT1] holds only for $p=2$, one could obtain Theorem \ref{theorem:summabletangent} for $2 \leq p < \infty$. This is the only reason why we kept the current (longer and more technical) proof. 
\end{remark}

\section{\eqref{item:main_tan} implies \eqref{item:main_beta}}

Consider the decomposition of $E$ into the intrinsic cubes given in Theorem \ref{theorem:christ}. For a cube $Q \in \cubes$, set
\begin{align}
    \beta_E^{p,d}(Q) := \beta_E^{p,d}(B_Q), \label{eq:betacubes}
\end{align}
where 
$
B_Q= B(x_Q, C_1 \ell(Q)).
$

\begin{proposition} \label{theorem:tangentbeta}
Let $E \subset \R^{n}$ be lower content $d$-regular such that $E \subset B(0,1)$. If $d=1$ or $d=2$, let $1 \leq p < \infty$. If $d \geq 3$, let $1 \leq p < \frac{2d}{d-2}$. Then for $\hd$-almost all tangent points $x \in E$, it holds that 
\begin{align*}
    \sum_{\substack{Q \in \cubes; Q \subset B(0,1) \\ Q \ni x}} \beta_E^{p,d}(Q)^2 < \infty.
\end{align*}
\end{proposition}
The following lemma will be useful during the proof; it can be found in Mattila's book [M1], Chapter 15. 
\begin{lemma}[{[M1], Lemma 15.12}] \label{lemma:mattila}
Let $E \subset \R^n$, $V \in G(d, n)$, $\theta \in (0,\pi/2)$ and $0< r< \infty$. If 
\begin{align}\label{eq:cones}
    & E \setminus (X(a, V, \theta) \cap B(a, r)) = \emptyset \,\, \mbox{for all} \enskip a \in E, \mbox{ and }\\
    & \diam(E) < r,
\end{align}
then there exists a bi-Lipschitz map $f:V \supset \Pi_V(E) \to \R^n$ with $\Lip(f) = \frac{1}{\cos(\theta)}$ such that $E \subset f(V)$.
\end{lemma}

Recall the notation
\begin{align*}
& X(a, V, \theta) := \{ x \in \R^n| \dist(x-a, V) < \sin(\theta) |x-a| \} \\
& X(a, V^\perp, \theta) := \{ x \in \R^n| \dist(x-a, V^\perp) < \cos(\theta) |x-a| \},
\end{align*}
where $a \in \R^n$, $V \in G(d, n)$ and $V^\perp$ is the orthogonal complement of $V$.
From now on, let us write
\begin{align*}
    X(a, V^\perp, \theta, r) := X(a, V^\perp, \theta) \cap B(a, r).
\end{align*}

The following is an elementary fact, which we prove for completeness.
\begin{lemma}
Let $x \in E$ be a tangent point. Then it is a $C$-tangent point. 
\end{lemma}
\begin{proof}
Recall the definition of $C$-tangent in \ref{eq:tangent_cone}; if the lemma did not hold, for each $d$-dimensional affine plane $V$ we could find a $\theta \in (0,\pi/2)$ such that for all $r>0$, there existed a $y \in X(x, V^\perp, \theta, r) \cap E$; such a $y$ would have $\dist(y, V) \geq \sin(\theta)|y-x|$. One can easily see that this implies that $\limsup_{r \downarrow 0} \sup_{y \in B(x,r)\cap E} \dist(y,V)/r \sin(\theta)$.
\end{proof}

 Denote by $\dT(E)$ the set of tangent points of $E$. 
 We can assume without loss of generality that $\hd(\dT(E)) >0$, for otherwise there is nothing to prove. If $x \in \dT(E)$, denote its tangent by $L_x$. 
Fix $\theta \in (0, \pi/2)$ and let $\{L_k\}_{k\in \N} \subset $ be a dense countable subset of the Grassmannian,  $\{r_\ell\}_{\ell \in \N}$ be a dense, countable subset of $(0,1)$. Set
\begin{align*}
    X\left(x, L^\perp_k , \theta, r_\ell\right) := X \left(x, L^\perp_k , \theta\right) \cap B(x,r_\ell),
\end{align*}
and
\begin{align*}
    K_{n, \ell}:= \left\{x \in \dT(E) \, |\, X\left(x, L^\perp_k , \theta, r_\ell\right) \cap E = \emptyset \right\}. 
\end{align*}
Clearly,  
$
    \bigcup_{k,\ell \in \N} K_{n, \ell} = \dT(E).
$
Indeed, if $x \in \dT(E)$, then there exists an $r>0$ and an affine $d$-plane so that $X(x, L^\perp, \theta, r) \cap E = \emptyset$; since $\{L_k\}$ is dense, for each $\delta>0$, we may find an $k\in \N$ so that $\dist_H(L_{k}\cap B , (L_x-x) \cap B) < \delta$, for, say, $B=B(0,1)$. It is clear then, that we may find an $r_\ell \leq r$ so that 
$
    X\left(x, L^\perp_k, \theta, r_\ell\right) \cap E = \emptyset.
$
\begin{remark} \label{remark:lipK}
Without loss of generality, we can assume that $K_{k, \ell}$ is compact and that $\diam(K_{k, \ell}) \leq r_\ell/2$. By Lemma \ref{lemma:mattila}, each $K_{k, \ell}$ can be covered by the image of a bi-Lipschitz map $f: \Pi_{L_k}(K_{k, \ell}) \to \R^n$ with $\Lip(f) = \frac{1}{\cos(\theta)}$.  
\end{remark}

\begin{lemma} \label{lemma:betaontan}
Fix $k, \ell$ and consider $K_{k, \ell}$ as above. Let $p$ be given as in Proposition \ref{theorem:tangentbeta}. Then for $\hd$-almost every point $x \in K_{k,\ell}$, 
\begin{align}
    \sum_{\substack{Q \in \cubes, \, \ell(Q) \leq 1 \\ x \in Q}} \betae{p}(Q)^2 < \infty.
\end{align}
\end{lemma}
Denote by $Q_0$ the minimal cube in $\cubes$ such that $K \subset 3 Q_0$; we may assume that $\diam(Q_0) \leq \frac{1}{3}r_\ell$. Define
\begin{align}
& \mathcal{S}:= \left\{ Q \in \cubes; Q \subset B(0,1)\, |\, Q \cap K\neq \emptyset \enskip \mbox{ and } Q \subsetneq Q_0 \right\} \label{def:smallfam}\\
& \mathcal{L} := \left\{ Q \in \cubes; Q \subset B(0,1) \, |\, Q \cap K \neq \emptyset \enskip \mbox{ and } \enskip Q \supset Q_0 \right\}.
\end{align}

\begin{sublemma} \label{lemma:pointcubes}
We have
\begin{align}\label{eq:cubesest1}
\int_{K} \sum_{\substack{Q \in \cubes, \, \ell(Q) \leq 1 \\ x \in Q}} \beta_E^{p,d}(Q)^2 \, d \hd(x) \lesssim \sum_{Q \in \mathcal{S} \cup \mathcal{L}} \beta_E^{p,d} (Q)^2 \ell(Q)^d.
\end{align}
\end{sublemma}
\begin{proof}
We have that
\begin{align} \label{eq:cubesest3}
\int_{K} \sum_{\substack{Q \in \cubes, \, \ell(Q) \leq 1 \\ x \in Q}} \beta_E^{p,d} (Q)^2\, d \hd(x) & 
= \int_{K} \sum_{Q \in \mathcal{S} \cup \mathcal{L}} \beta_E^{p,d}(Q)^2 \chara_{\{(y, Q) \in K \times \cubes\, | \, y \in Q\}} (x)\, d \hd(x) \nonumber\\
& = \sum_{Q \in \mathcal{S} \cup \mathcal{L}} \beta_E^{p,d} (Q)^2 \int_{K} \chara_{\{y \in K\, |\, y\in Q\}} (x) d \hd(x) \nonumber\\
& \lesssim \sum_{Q \in \mathcal{S} \cup \mathcal{L}} \beta_{E}^{p,d}(Q)^2 \ell(Q)^d.
\end{align}
The second equality is an application of Fubini-Tonelli's theorem. For the inequality, notice that we are integrating over $K$. Such set is not necessarily Ahlfors regular, for the density may be zero in some balls. However the upper $d$-regularity is still maintained, since it is a subset of a $d$-dimensional Lipschitz graph. 
\end{proof}
\begin{sublemma} \label{lemma:largecubes}
\begin{align} \label{eq:bigcubessum}
    \sum_{Q \in \mathcal{L}} \beta_E^{p,d} (Q)^2 \ell(Q)^d <\infty.
\end{align}
\end{sublemma}
\begin{proof}
Clearly,
$
\betae{p}(Q)^2 \lesssim 1.
$
Also recall that any $Q \in \mathcal{L}$ is contained in $B(0,1)$. Moreover, for each generation, there can be at most one cube which contains $Q_0$. Hence the sum \eqref{eq:bigcubessum} is really a finite sum of bounded terms, and therefore must be finite.
\end{proof}
Let us now deal with the sum over the family $\mathcal{S}$. 
Without loss of generality, take $L_k$ (recall that $K=K_{k,\ell}$) to be $\R^d$. 
We extend $f$ (as given in Remark \ref{remark:lipK}) to the whole $\R^d$; if we call $g$ such an extension, it is a standard fact that $\Lip(g) \sim_{n,d} \Lip(f)$. Set 
\begin{align}
    G := g(\R^d).\nonumber
\end{align}
We apply Lemma \ref{lemma:azzamschul}, so to obtain
\begin{align} \label{eq:Error}
    \betae{p}(Q)^2 \lesssim \beta_{G}^{p,d}(Q)^2 + \ps{\frac{1}{\ell(Q)^d}\int_{E \cap 2B_Q} \ps{\frac{\dist(y, G)}{\ell(Q)}}^p \, d \hdc(y) }^{\frac{2}{p}} =: A+B. 
\end{align}
 Note that, because $G$ is the Lipschitz image of a plane, then
 \begin{align} \label{eq:smallLip}
     A=\sum_{Q \in \mathcal{S}} \beta_{G}^{p,d} (Q)^2 \ell(Q)^d <\infty
 \end{align}
 by Theorem I in [AS1]. 

Before estimating $B$ in \eqref{eq:Error}, we will construct a `Whitney decomposition' for $Q_0 \setminus K$.
We proceed as follow. Because $\Pi_{\R^d}(K)$ is compact, there exists a Whitney decomposition of $\Pi_{\R^d}(K)^c$ in $\R^d$; let us denote such a decomposition by $\cW_d$. For $S \in \cW_d$, set
\begin{align}
    T_S:= \ps{S \times \R^{n-d}} \cap Q_0,
\end{align}
and
\begin{align}
    \cW_n := \{ T_S \neq \emptyset \, |\, S \in \cW_d\}.
\end{align}
\begin{sublemma}
For $T_S \in \cW_n$, 
\begin{align}
\diam(T_S) \lesssim_{\Lip(f)} \diam(S).
\end{align}
\end{sublemma}
\begin{proof}
Let $y,z \in T_S \subset Q_0 \setminus K$. For brevity, let us write $\Pi:= \Pi_{\R^d}$ and $ \widetilde K := \Pi(K)$.
Pick $p=p(y) \in \wt K$ such that 
\begin{align} \label{eq:choice2}
    |p - \Pi(y)| \leq 2 \dist(\Pi(y), \wt K),
\end{align}
and set 
\begin{align} \label{eq:choice1}
    x(y) := f(p) \in K.
\end{align}
We choose $q=q(z) \in \wt K$ and $x(z)= f(q) \in K$ is the same manner. 
Now, let $L_y$ (resp. $L_z$) be the $d$-plane parallel to $\R^d$ which contains $x(y)$ (resp. $x(z)$). Denote by $\Pi_y$ (resp. $\Pi_z$) the orthogonal projection onto $L_y$ (resp. $L_z$). 
Then set
\begin{align}
   & \wt y := \Pi_y (y) \label{eq:choice4} \\
   &  \wt z := \Pi_z (z).
\end{align}
Then,
$
    |y - z| \leq |y - \wt y| + |\wt y - \wt z| + |\wt z- z|. 
$
We see that
\begin{align*}
    | y - \wt y|  = |(y-x(y)) - (\wt y - x(y))| 
     = \dist(y - x(y), \R^d).
\end{align*}
Since $x(y) \in K$ and $y \in Q_0$ (and thus in particular $|x(y)-y| < r_\ell$), then 
\begin{align} \label{eq:dist1}
    \dist(y- x(y), \R^d) \leq \sin(\theta) |y-x(y)|,
\end{align}
and also
\begin{align} \label{eq:dist2}
    \dist(y - x(y), (\R^d)^\perp)  \geq \cos(\theta) |y - x(y)|. 
\end{align}
Furthermore, notice that 
\begin{align} \label{eq:dist3}
    \dist( y-  x(y), (\R^d)^\perp) = |\Pi(x(y)) - \Pi(y)|.
\end{align}
Thus, \eqref{eq:dist1}, \eqref{eq:dist2}, \eqref{eq:dist3} and the choice of $x(y)$ (as in \eqref{eq:choice2} and \eqref{eq:choice1}), give
\begin{align*}
    |y- \wt y| & \leq \sin(\theta) |x(y)-y| \\
    & \leq \frac{\sin(\theta)}{\cos(\theta)} \dist(y - x(y), (\R^d)^\perp) \\
    & = \frac{\sin(\theta)}{\cos(\theta)} |\Pi(x(y))- \Pi(y)| \\
    & \leq \frac{1}{\cos(\theta)} |p - \Pi(y) | \\
    & \leq \Lip(F) 2 \dist(\Pi(K), \Pi(y)).
\end{align*}
Since $\Pi(y) \in S$, and $S$ is a Whitney cube, we see that 
\begin{align*}
    \dist(\wt K, \Pi(y)) \leq \dist(\wt K, S) + \diam (S)  \lesssim \diam(S).
\end{align*}
We can conclude that
\begin{align} \label{eq:dist4}
    |y - \wt y| \lesssim_{\Lip(f)} \diam(S).
\end{align}
The same argument gives $|z- \wt z| \lesssim_{\Lip(f)} \diam(S)$. 

We need to estimate $|\wt y - \wt z|$. We have
\begin{align*}
    |\wt y- \wt z| \leq |\wt y - g(\Pi(\wt y))|+ |g(\Pi(\wt y)) - g(\Pi(\wt z))| + |g(\Pi(\wt z))- \wt z|. 
\end{align*}
Now, 
\begin{align} \label{eq:lemmawhit2}
    |g(\Pi(\wt y)) - g(\Pi(\wt z))| \leq \lip(f) |\Pi(\wt y)- \Pi(\wt z)| \leq \Lip(f) \diam(S),
\end{align}
since $\Pi(\wt y)$, $\Pi(\wt z) \in S$.
On the other hand, again using the choice of $x(y)$,  
\begin{align} \label{eq:lemmawhit3}
    |\wt y - g(\Pi(\wt y))|  &\leq |\wt y - x(y)| + |x(y) - g(\Pi(\wt y))| \nonumber\\
    & = |\Pi(x(y))- \Pi(y)| + |g(p) - g(\Pi(y))| \nonumber\\
    & = |p - \Pi(y)| + |g(p) - g(\Pi(y))| \nonumber\\
    & \lesssim \Lip(f) \diam(S). 
\end{align}
This, together with \eqref{eq:lemmawhit2} and \eqref{eq:dist4}, give the lemma. 
\end{proof}

\begin{sublemma} For $T_S \in \cW_n$,
\begin{align} \label{eq:whitney_diams2}
\dist(T_S, K) \sim \diam(S).
\end{align}
\end{sublemma}
\begin{proof}
Since $\Pi$ is $1$-Lipschitz and $S$ is a Whitney cube, 
\begin{align*}
\dist(T_S, K) \geq \dist(S, \Pi(K)) \sim \diam(S).
\end{align*}

On the other hand, if we let $y \in T_S$, $x(y)$ as in \eqref{eq:choice2} and \eqref{eq:choice1}, we see that
\begin{align}\label{eq:dist5}
    \dist(T_S, K) & \leq |y - x(y)| \nonumber\\
    & \leq \frac{1}{cos(\theta)} \dist(y-x(y), (\R^d)^\perp) \nonumber\\
    & = \frac{1}{\cos(\theta)} |\Pi(x(y))-\Pi(y)| \nonumber\\
    & = \frac{1}{\cos(\theta)} |p- \Pi(y)|\nonumber \\
    & \lesssim_{\Lip(f)} \diam(S).
\end{align}
\end{proof}

Furthermore, note that
\begin{align} \label{eq:Tswhitney0}
    Q_0 \setminus K \subset \bigcup_{S \in \cW_d} T_S, 
\end{align}
and that, if $P, S \in \cW_d$ are so that $P \cap S = \emptyset$, then 
\begin{align} \label{eq:Tswhitney1}
    T_S \cap T_P = \emptyset.
\end{align}
Thus $\cW_n$ is a disjoint decomposition of $Q_0 \setminus K$ so that
\begin{align} \label{eq:whitney_diams}
     \hdc(T_S)\lesssim_{\Lip(f)} \hdc(S).
\end{align}

We state one last lemma before proving the summability of the error term. 
\begin{sublemma} \label{lemma:distF}
Let $y \in T_S \in \cW_n$. Then
\begin{align*}
    \dist(y, G) \lesssim_{\Lip(f)} \diam(S).
\end{align*}
\end{sublemma}
\begin{proof}
For $y \in T_S$, let $x(y)$ and $p=p(y)$ as above. Then
$
    \dist(y, G)   \leq |y - g(p)| 
     = |y - x(y)|.
$
If we now argue as in \eqref{eq:dist5}, the lemma follows.
\end{proof}

Let us now go back to the proof of Lemma \ref{lemma:betaontan}. Recall that we had to give an estimate for the error term $B$ in \eqref{eq:Error}.
\begin{sublemma} \label{lemma:errorBbound}
We have 
\begin{align}
    \sum_{Q \in \mathcal{S}} \ps{\frac{1}{\ell(Q)^d}\int_{E \cap 2B_Q} \ps{\frac{\dist(y, G)}{\ell(Q)}}^p \, d \hdc(y) }^{\frac{2}{p}} < \infty.
\end{align}
\end{sublemma}
\begin{proof}
Without loss of generality, we may assume that $p \geq 2$, since, by Lemma \ref{lemma:betaincreasep}, $\beta_E^{p, d} \lesssim \beta_E^{2,d}$ if $p \leq 2$.
Let $Q \in \mathcal{S}$; then we have that
\begin{align}
 \int_{2B_Q \cap E} \ps{\frac{\dist(y, G)}{\ell(Q)}}^p \,  d\hdc(y) 
    & \leq  \int_{(Q_0 \setminus K) \cap 2B_Q} \ps{\frac{\dist(y, G)}{\ell(Q)}}^p \,  d\hdc(y),
\end{align}
where we used the fact that the integrand is nonzero only if $y \in Q_0 \setminus K$, since $K \subset G$.
Pick a constant $c_1 \geq 1$ so that if $Q' \in \cubes$ is the parent cube of $Q$, then $2B_Q \cap E \subset c_1 Q \subset 2B_{Q'}$. This may be done if one set $\delta>0$ in Theorem \ref{theorem:christ} small enough. Using \eqref{eq:Tswhitney0} and Lemma \ref{lemma:distF}, we see that
\begin{align*}
\int_{\ps{Q_0 \setminus K}\cap 2B_Q} \ps{\frac{\dist(y, G)}{\ell(Q)}}^p \,  d\hdc(y) & \leq \sum_{\substack{T_S \in \cW_{n} \\ T_S \cap c_1 Q \neq \emptyset}} \int_{T_S} \ps{\frac{\dist(y, G)}{\ell(Q)}}^p \, d \hdc (y)\\
& \lesssim \sum_{\substack{T_S \in \cW_{n}\\ T_S \cap c_1 Q \neq \emptyset}} \frac{\hdc(T_S) \diam(T_S)^p}{\ell(Q)^p}
\end{align*}
Thus, recalling the well known fact that if $0<\alpha\leq1$ and $a,b$ are two nonnegative real numbers, then $(a+b)^\alpha \leq a^\alpha + b^\alpha$, we get that
\begin{align}
    & \sum_{Q \in \mathcal{S}} 
    \ps{ 
    \sum_{\substack{T_S \in \cW_{n}\\ T_S \cap c_1 Q \neq \emptyset}} \frac{\hdc(T_S) \diam(T_S)^p}{\ell(Q)^p} \ell(Q)^{d\ps{\frac{p-2}{2}}}
    }^{\frac{2}{p}} \nonumber\\
     & \lesssim \sum_{Q \in \mathcal{S}} 
     \sum_{\substack{T_S \in \cW_{n}\\ T_S \cap c_1 Q \neq \emptyset}} \left( \frac{\diam(T_S)^{p+d}}{\ell(Q)^{p+d}}\right)^{\frac{2}{p}} \, \ell(Q)^d 
    \lesssim \sum_{T_S \in \cW_n}  \sum_{\substack{Q \in \mathcal{S} \\ c_1 Q \cap T_S \neq \emptyset}} \frac{\diam(T_S)^{\frac{2d}{p} + 2}}{\ell(Q)^{\frac{2d}{p} - d +2}} \label{eq:whitney2}
\end{align}
Notice that, 
\begin{align*}
    \mbox{if } d=1 \mbox{ or } d=2\, \mbox{ then }\enskip  2-\frac{2}{p}d\ps{\frac{p-2}{2}} > 0 \enskip \forall p \geq 2.
\end{align*}
On the other hand,
\begin{align*}
    \mbox{if } d \geq 3 \mbox{ then } \enskip 2-\frac{2}{p}d\ps{\frac{p-2}{2}} > 0  \mbox{ for } 2 \leq p < \frac{2d}{d-2}.
\end{align*}
Given the hypotheses in Proposition \ref{theorem:tangentbeta}, in either case we may set
\begin{align*}
    2-\frac{2}{p}d\ps{\frac{p-2}{2}} =: \alpha > 0.
\end{align*}

Now let $z \in T_S \cap Q$. Then we see, using \eqref{eq:whitney_diams2} and recalling that, by definition of $\mathcal{S}$, $K \cap Q\neq \emptyset$,
\begin{align*}
\diam(T_S) \sim \dist(T_S, K)  \leq \dist(T_S, z)+ \dist(z, K)
\lesssim \ell(Q).
\end{align*}
This implies that, given a Whitney cube $T_S \in \cW_n$, and given $k \in \Z$, the number of cubes belonging to $\mathcal{D}_k$ (i.e. the number of cubes of $k$-th generation) such that $T_S \cap c_1 Q \neq \emptyset$ is bounded above by an universal constant.
Hence the interior sum in \eqref{eq:whitney2} is a geometric series. Thus we have
\begin{align*}
   \sum_{\substack{Q \in \mathcal{S} \\ c_1 Q \cap T_S \neq \emptyset}} \frac{\diam(T_S)^{\frac{2d}{p} + 2}}{\ell(Q)^{\frac{2d}{p} - d +2}} & \lesssim\sum_{\substack{Q \in \mathcal{S} \\ c_1 Q \cap T_S \neq \emptyset}} \ell(Q)^d
   \lesssim \diam(T_S)^d.
\end{align*}
Hence we obtain
\begin{align*}
    \eqref{eq:whitney2} & \lesssim \sum_{T_S \in \cW_n} \hdc(T_S)
    \lesssim \sum_{S \in \cW_d} \hdc(S) 
     \lesssim r_\ell^d.
\end{align*}
This proves the Sublemma.
\end{proof}
We now see that \eqref{eq:cubesest3} together with Sublemma \ref{lemma:largecubes}, Sublemma \ref{lemma:errorBbound} and \eqref{eq:smallLip} give
\begin{align*}
\int_{K} \sum_{\substack{Q \in \cubes, \, \ell(Q) \leq 1 \\ x \in Q}} \betae{2}(Q)^2 \, d \hdc(y) < \infty
\end{align*}
and therefore
\begin{align*}
    \sum_{\substack{Q \in \cubes, \, \ell(Q) \leq 1 \\ x \in Q}} \beta_E^{2,d}(Q)^2 < \infty \enskip \enskip \mbox{for} \enskip \enskip \hd \mbox{-almost all} \enskip x \in K. 
\end{align*}
This concludes the proof of Lemma \ref{lemma:betaontan} and therefore of Proposition \ref{theorem:tangentbeta}.

\begin{corollary} \label{corol:final}
Let $E \subset \R^{n}$ be a $d$-lower content regular set such that $E \subset B(0,1)$. If $d=1$ or $d=2$, let $1 \leq p < \infty$. If $d \geq 3$, let $1 \leq p < \frac{2d}{d-2}$. Then, except for a set of zero $\hd$ measure, if $x \in E$ is a tangent point of $E$ then
\begin{align*}
\int_0^1 \beta_E^{p,d} (x,t)^2 \, \frac{dt}{t} < \infty.
\end{align*}
\end{corollary}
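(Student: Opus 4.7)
The plan is to deduce Corollary \ref{corol:final} from Theorem \ref{theorem:tangentbeta} by converting the discrete sum over Christ cubes into the continuous integral, reversing the manipulation used in the proof of Corollary \ref{corol:final2}. Fix a tangent point $x \in E$ at which the conclusion of Theorem \ref{theorem:tangentbeta} holds; these $x$ form a full $\hd$-measure subset of $\Tn(E)$. For $k \in \Z$ set $r_k = \delta^k$ (with $\delta$ from Theorem \ref{theorem:christ}) and let $Q_k = Q_k(x) \in \cubes$ denote the unique level-$k$ Christ cube containing $x$, so that $\ell(Q_k) = r_k$.

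I split $(0,\infty) = \bigcup_{k \in \Z}(r_{k+1}, r_k]$ and estimate each window via monotonicity. For $t \in (r_{k+1}, r_k]$, the inclusion $B(x,t) \subset B(x, r_k)$ combined with Lemma \ref{lemma:monotonicity} gives
\[
\beta_E^{p,d}(x,t)^p \leq (r_k/t)^{d+p}\,\beta_E^{p,d}(x,r_k)^p \leq \delta^{-(d+p)}\,\beta_E^{p,d}(x,r_k)^p,
\]
hence $\beta_E^{p,d}(x,t)^2 \lesssim_{\delta,d,p} \beta_E^{p,d}(x,r_k)^2$. Integrating each window in $t$ against $dt/t$ and summing over $k \in \Z$ yields $\int_0^\infty \beta_E^{p,d}(x,t)^2\,\frac{dt}{t} \lesssim \sum_k \beta_E^{p,d}(x,r_k)^2$. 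To convert point-$\beta$ into cube-$\beta$, note that $x \in Q_k$ implies $|x - x_{Q_k}| \leq C_1 \ell(Q_k) = C_1 r_k$, so $B(x,r_k) \subset B(x_{Q_k},(C_1+1)r_k) \subset 2B_{Q_k}$, and one more application of Lemma \ref{lemma:monotonicity} produces $\beta_E^{p,d}(x,r_k) \lesssim_{C_1,d,p} \beta_E^{p,d}(2B_{Q_k})$.

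It remains only to absorb the factor of $2$ in $2B_{Q_k}$. A reading of Sections 4.2--4.5 shows that the proof of Theorem \ref{theorem:tangentbeta} uses $B_Q$ only as a ball of radius comparable to $\ell(Q)$ that contains $Q$; rerunning the argument with $2C_1$ in place of $C_1$ (i.e., with $2B_Q$ in the role of $B_Q$) yields the same conclusion $\sum_{Q \ni x}\beta_E^{p,d}(2B_Q)^2 < \infty$ on the same full-$\hd$-measure set of tangent points. Combining the three estimates,
\[
\int_0^\infty \beta_E^{p,d}(x,t)^2\,\frac{dt}{t} \lesssim \sum_{Q \ni x}\beta_E^{p,d}(2B_Q)^2 < \infty,
\]
which proves the corollary. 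The only non-routine point in the whole argument is this constant-factor dilation of $B_Q$; it amounts to bookkeeping and introduces no new geometry.
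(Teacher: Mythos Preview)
Your proof is correct and follows essentially the same approach as the paper: use Lemma \ref{lemma:monotonicity} to bound $\beta_E^{p,d}(x,t)$ by $\beta_E^{p,d}(Q)$ for the Christ cube $Q \ni x$ at scale comparable to $t$, integrate each window, and invoke Theorem \ref{theorem:tangentbeta}. The paper compresses these steps into one line and does not make explicit the dilation issue you raise about $B(x,t) \subset 2B_Q$ versus $B_Q$; your observation that the constant $C_1$ in the definition of $B_Q$ is harmless bookkeeping is exactly right and is implicitly what the paper relies on.
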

\begin{proof}
Let $x \in E$ be a tangent point of $E$.
Recall Lemma \ref{lemma:monotonicity}. Then, for $1\leq p < \infty$,
\begin{align}
    \int_0^1 \beta_E^{p,d} (x,t)^2 \, \frac{dt}{t} & \lesssim \int_0^1 \beta_E^{p,d}(Q)^2 \, \frac{dt}{t} \nonumber\\
    & \sim \sum_{Q \ni x} \beta_E^{p,d}(Q)^2 \, \int_0^1 \chara_{\{t \in [0,\infty)| C^{-1}\ell(Q) \leq t \leq C \ell(Q)\}} \, \frac{dt}{t} \nonumber\\
    & = \sum_{Q \ni x} \beta_E^{p,d}(Q)^2 \, \int_{C^{-1}\ell(Q)}^{C \ell(Q)} \, dt/t \nonumber\\
    & = \ln (C^2) \sum_{Q \ni x} \beta_E^{p,d}(Q)^2.  \label{eq:corollary}
\end{align}
This sum is bounded by Theorem \eqref{theorem:tangentbeta}.
\end{proof}

Now, any bounded set $E \subset \R^n$ can be dilated and translated so that it is contained in $B(0,1)$. Thus Theorem \ref{theorem:main} follows immediately from Corollaries \ref{corol:final2} and \ref{corol:final}.

\input{06_biblio}
\Addresses


\bibliographystyle{amsplain}

\end{document}